\newcommand{\N}{{\mathbb N}}
\newcommand{\Q}{{\mathbb Q}}
\newcommand{\R}{{\mathbb R}}
\newcommand{\F}{{\mathbb F}}
\newcommand{\Sym}{{\mathcal S}}
\newcommand{\T}{{\mathcal T}}
\newcommand{\besep}{\be_{\rm sep}}
\newcommand{\sisep}{\si}
\DeclareMathOperator{\tr}{tr}
\DeclareMathOperator{\GL}{GL}
\newenvironment{eq}{\begin{equation}}{\end{equation}}
\newcommand{\upto}{,\ldots ,}
\renewcommand{\Ref}[1]{(\ref{#1})}
\newcommand{\sumint}[3]{#1\ast {\rm int}_{#3}(#2)}
\renewcommand{\int}[2]{{\rm int}_{#2}(#1)}
\newcommand{\si}{\sigma}
\newcommand{\al}{\alpha}
\newcommand{\be}{\beta}
\newcommand{\ga}{\gamma}
\newcommand{\De}{\Delta}
\newcommand{\ov}[1]{\overline{#1}}
\newcommand{\un}[1]{{\underline{#1}} }
\newcommand{\Ker}{{\mathop{\rm{Ker }}}}
\newtheorem{lem}{Lemma}[section]
\newtheorem{thm}[lem]{Theorem}
\newtheorem{cor}[lem]{Corollary}
\theoremstyle{definition}
\newtheorem{example}[lem]{Example}
\newtheorem{remark}[lem]{Remark}
\numberwithin{equation}{section}
\begin{document}

\title{Separating invariants over finite fields}

\author{Gregor Kemper, Artem Lopatin, Fabian Reimers}

\address{State University of Campinas, 651 Sergio Buarque de Holanda, 13083-859 Campinas, SP, Brazil}

\email{artem\underline{\;\;}lopatin@yahoo.com}

\address{Technische Universit\"at M\"unchen, Zentrum Mathematik - M11, 
Boltzmannstr.~3, 85748 Garching, Germany}

\email{kemper@ma.tum.de}
\email{reimers@ma.tum.de}


\subjclass[2010]{13A50, 16R30, 20B30}

\keywords{Invariant theory, separating invariants, generators, relations, positive characteristic, symmetric group, multisymmetric polynomials}

\thanks{Acknowledgments. The second author was supported by FAPESP 2019/10821-8. We are grateful for this support.}

\begin{abstract}

We determine the minimal number of separating invariants for the invariant ring of a matrix group $G \leq \GL_n(\F_q)$ over the finite field $\F_q$. We show that this minimal number can be obtained with invariants of degree at most $|G|n(q-1)$. In the non-modular case this construction can be improved to give invariants of degree at most $n(q-1)$.

As examples we study separating invariants over the field $\F_2$ for two important representations of the symmetric group. 
\end{abstract}

\maketitle

\section*{Introduction}\label{section_intro}
For many properties of an invariant ring, e.g. the number of elements and the degrees in a minimal homogeneous generating system, it is possible to assume that the ground field is algebraically closed. This is not true for the notion of separating invariants and the aim of this paper is to study separating invariants for invariant rings over finite fields. 

We consider an $n$-dimensional vector space $V$ over a field $\F$.
The coordinate ring $\F[V]  = \F[ x_1 \upto x_n]$ of $V$ is isomorphic to the symmetric algebra $S(V^{\ast})$ over the dual space $V^{\ast}$, with $x_1 \upto x_n$ a basis of $V^\ast$. 
Let $G$ be a finite subgroup of $\GL(V) \cong \GL_n(\F)$. The algebra $\F[V]$ becomes a $G$-module with
$(g \cdot f)(v) = f(g^{-1}\cdot v)$ for all $f \in V^\ast$ and $v \in V$.
The ring of invariants 
$$\F[V]^G=\{f\in \F[V] \mid g\cdot f=f \text{ for all }g\in G\}$$ 
is a finitely generated $\F$-algebra.

In this paper we are interested in separating invariants.
Given a subset $S$ of $\F[V]^G$, we say that elements $u,v$ of $V$ {\it can be separated by $S$} if there exists an invariant $f\in S$ with $f(u)\neq f(v)$. A subset $S\subset \F[V]^G$ of the invariant ring is called {\it separating} if any $u, v \in V$ that can be separated by $\F[V]^G$ can also be separated by $S$. Since $G$ is finite, elements $u, v$ that are not in the same $G$-orbit can always be separated by $\F[V]^G$ (see~\cite[Section 2.4]{derksen2002computationalv2}). There exists a general construction of a finite separating set for $\F[V]^G$ consisting of invariants of degree at most $|G|$ (see \cite[Theorem 3.12.1]{derksen2002computationalv2}), but this construction does not yield a separating set of minimal size. Let $\besep(\F[V]^G)$  denote the minimal integer $\besep$ such that the set of all homogeneous invariants of degree  less or equal to $\besep$ is separating for $\F[V]^G$. Explicit minimal separating sets or upper bounds on $\besep$  have been calculated for various groups, see e.g. 
\cite{Cavalcante_Lopatin_1}, \cite{derksen2018}, \cite{domokos2017}, \cite{domokos20Add}, \cite{domokos20}, \cite{dufresne2014}, \cite{elmer2014}, \cite{Ferreira_Lopatin},  \cite{kaygorodov2018}, \cite{kohls2013}, \cite{reimers2020}.

In all these papers, $\F$ is assumed to be an infinite field, usually even algebraically closed.  In~\cite{dufresne2009separating}, Dufresne coined the term {\em geometrically separating} for invariants that remain separating when one passes to the vector space $\ov{V} = V \otimes_{\F} \ov{\F}$. For generating invariants, passing to the algebraic closure does not make any difference. But for separating invariants it does, so geometrically separating and separating are different concepts. For example, the single polynomial $x^3$ is a separating invariant for the trivial group acting on $V = \R^1$, but it is clearly not geometrically separating. Moreover,  every geometrically separating set must contain at least $n$ elements and the existence of separating sets of size $n$ was studied in \cite{dufresne2009separating} and \cite{reimers2018}. But as we will see in this paper, over finite fields there often exists a separating set of size less than~$n$.  Geometric separating sets over finite fields were considered  for some groups in a recent paper~\cite{Chen_Shank_Wehlau_21}.  

This paper may be the first one in the literature that explicitly deals with (non-geometrically) separating invariants over finite fields. This is because usually one uses separating invariants in geometric situations or to deal with problems in characteristic~$0$. However, the finite field case may be applicable to problems in discrete mathematics, and the third section of this paper, dealing with the isomorphism problem of simple graphs, points to such an application. So from now on let $\F = \F_q$ be a field with $q$ elements and throughout this paper let $k$ denote the number of $G$-orbits in $V$.

In Theorem~\ref{theo1} in Section~\ref{section_res} we prove that the minimal number of separating invariants is 
\begin{equation*}
\ga=\ga(q,k) :=\lceil\log_q(k)\rceil,
\end{equation*}
which is bounded above by~$n$ but very often smaller than~$n$.  We give an explicit construction of a minimal separating set of $\ga$ invariants of degree at most $|G|n(q-1)$.

Section~\ref{section_res2} is about improvements of Theorem~\ref{theo1}. In Theorem~\ref{theo2} we construct a minimal separating set of $\ga$ invariants of degree $n(q-1)$ in the non-modular case (i.e., when $|G|$ is invertible in $\F$) and in the case when $G$ consists entirely of monomial matrices (i.e., matrices that have exactly one nonzero entry in each row and in each column). The latter includes the case of permutation groups.

The last two sections  of the paper study separating invariants for two important representations of the symmetric group $\Sym_n$. The use of the letter $n$ now differs from before, as in Section~\ref{section_graphs} we consider the $\Sym_n$-representation of dimension $\binom{n}{2}$ defined by the action of $\Sym_n$ on subsets of size two of $\{ 1 \upto n \}$. Because of its importance to the graph isomorphism problem, the corresponding invariant ring has been studied over a field of characteristic zero in the past where generators are known for $n \leq 5$. But for solving the isomorphism problem for simple graphs, it is enough to consider separating invariants over the field $\F_2$. In Theorems~\ref{theoGraphs} and~\ref{theoGraphs2} we give minimal separating invariants over the field $\F_2$ for the cases $n = 4$ and $n = 5$.

In Section~\ref{section_Sn} we consider multisymmetric polynomials over a finite field. They are the elements of the invariant ring $\F[V^m]^{\Sym_n}$ where $\Sym_n$ acts on $V = \F^n$ by permuting the coordinates and then diagonally on the direct sum $V^m = V \oplus \ldots \oplus V$. In the case $\F = \F_2$ we give an explicit separating set for $\F[V^m]^{\Sym_n}$ which is minimal w.r.t. inclusion in Theorem~\ref{theo_F2}. We also compute that $\besep$ for $\F[V^m]^{\Sym_n}$ is equal to $2^{\lfloor  \log_2(n)\rfloor}$ (see Corollary~\ref{cor}).

\section{Minimal separating invariants}\label{section_res}

Recall that we are working over the finite field $\F = \F_q$. For any vector $w\in V$ with components $w_1 \upto w_n \in \F$ consider the following polynomial in $\F[V]$: 
$$f_w = (-1)^n \prod\limits_{a\in\F \setminus \{w_1 \}}(x_1 - a) \;\;\cdots \prod\limits_{a\in\F\setminus \{w_n\}}(x_n - a).$$
Note that $f_w(w)=(-1)^n \left(\prod\limits_{a\in\F^{\ast}} a \right)^{\!\!n} =  1$ and for each $v\in V$ we have 
$$f_w(v)=\left\{
\begin{array}{cl}
1,& v=w \\
0,& \text{otherwise} \\
\end{array}
\right..$$
Let $k$ denote the number of $G$-orbits in $V$ and decompose $V$ as a union of orbits: $V=W_1 \sqcup \cdots \sqcup W_k$. For every $1\leq j\leq k$ we define
\begin{equation}\label{eq_fj}
f_j=\sum\limits_{w\in W_j} f_w.
\end{equation}
Then for each $v\in V$ we have
\begin{equation}\label{eq_f}
f_j(v)=\left\{
\begin{array}{cl}
1,& v\in W_j \\
0,& \text{otherwise} \\
\end{array}
\right..
\end{equation}
Note that the $f_j$'s are not necessarily invariants. To remedy this, for every $1\leq j\leq k$ introduce the following elements of $\F[V]^G$:
$$n_j=\prod\limits_{g\in G} (g\cdot f_j).$$
As for the  $f_j$, for each $v\in V$ we have 
\begin{equation}\label{eq_n}
n_j(v)=\left\{
\begin{array}{cl}
1,& v\in W_j \\
0,& \text{otherwise} \\
\end{array}
\right..
\end{equation}
Hence $n_1 \upto n_k$ form a separating set for $\F[V]^G$.

Since for
\begin{equation}\label{eqGamma}
\ga=\ga(q,k) :=\lceil\log_q(k)\rceil,
\end{equation}
we have  $q^{\ga-1}<k\leq q^{\ga}$, so there exist~$k$ pairwise different vectors in $\F^\gamma$, which we may put together in a matrix $(a_{ij})$ over $\F$ of size $(\gamma \times k)$.

\begin{thm}\label{theo1} 
\begin{enumerate}
\item[1.] Every separating set for $\F[V]^G$ contains at least $\ga$ elements. 

\item[2.] Let $(a_{ij}) \in \F^{\gamma \times k}$ be a matrix whose columns are pairwise different and define $$t_i =  a_{i1} n_1 + \cdots + a_{ik} n_k \quad (1\leq i\leq \ga).$$
Then $t_1 \upto t_{\ga}$ form a minimal separating set for $\F[V]^G$ consisting of (non-homogeneous) elements of degree less or equal to $|G|n(q-1)$.
\end{enumerate}
\end{thm}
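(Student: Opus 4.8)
The plan is to treat the two parts separately: part~1 follows from a counting argument, and part~2 follows by exploiting the indicator-function property~\Ref{eq_n} of the $n_j$ together with a degree estimate.

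For part~1, I would note that any separating set $S = \{s_1 \upto s_m\} \subseteq \F[V]^G$ induces a map $V \to \F^m$, $v \mapsto (s_1(v) \upto s_m(v))$, which is constant on $G$-orbits because each $s_i$ is invariant. Since points in distinct orbits can be separated by $\F[V]^G$, and hence by $S$, this map takes pairwise distinct values on the $k$ orbits; that is, it descends to an injection from the orbit set into $\F^m$. As $|\F^m| = q^m$, injectivity forces $q^m \geq k$, whence $m \geq \lceil \log_q(k) \rceil = \ga$, which is the claimed lower bound.

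For part~2, the invariance of each $t_i$ is immediate, being an $\F$-linear combination of the invariants $n_1 \upto n_k$. To prove the separating property I would invoke~\Ref{eq_n}: for $v \in W_j$ one has $n_j(v) = 1$ and $n_{j'}(v) = 0$ for all $j' \neq j$, so $t_i(v) = a_{ij}$, and hence the tuple $(t_1(v) \upto t_\ga(v))$ equals the $j$-th column of $(a_{ij})$. Because the columns are pairwise distinct by hypothesis, vectors in different orbits produce different tuples, so the $t_i$ separate any pair of points that $\F[V]^G$ separates. Minimality is then immediate from part~1, since there are exactly $\ga$ of the $t_i$.

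It remains to bound $\deg t_i$, which reduces to bounding $\deg n_j$. Each factor $\prod_{a \in \F \setminus \{w_i\}}(x_i - a)$ is a product of $q-1$ linear forms in $x_i$, so $\deg f_w = n(q-1)$; thus $f_j$, being a sum of such polynomials, has degree at most $n(q-1)$, as does each translate $g \cdot f_j$ since $g$ acts linearly on the variables. As $n_j = \prod_{g \in G}(g \cdot f_j)$ has $|G|$ factors, $\deg n_j \leq |G|\,n(q-1)$, and this bound passes to the linear combinations $t_i$. The whole argument is fairly routine once~\Ref{eq_n} is available; the only genuinely conceptual point is the counting argument in part~1, where one must recognize that it is the induced map on the finite orbit set --- not on all of $V$ --- whose injectivity is forced by separability of points in distinct orbits.
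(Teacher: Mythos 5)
Your proposal is correct and follows essentially the same route as the paper: the same counting argument via evaluation on orbit representatives for part~1, and the same use of the indicator property~\Ref{eq_n} to identify the tuple $(t_1(v) \upto t_\ga(v))$ with a column of $(a_{ij})$ for part~2. The only difference is that you spell out the degree bound $\deg n_j \leq |G|\,n(q-1)$ explicitly, which the paper leaves implicit.
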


\begin{proof} Consider some representatives of $G$-orbits: $w_1\in W_1 \upto w_k\in W_k$.

\medskip
\noindent{\bf 1.} Assume that $l_1 \upto l_r$ is a separating set for $\F[V]^G$. Then the column vectors \[ \begin{pmatrix}l_1(w_j)\\ \vdots \\ l_r(w_j) \end{pmatrix} \quad \text{ with } 1 \leq j \leq k \] are pairwise different. Hence $k \leq q^r$ and $\gamma \leq r$ follows from (\ref{eqGamma}).

\medskip
\noindent{\bf 2.} Applying formula~(\ref{eq_n}) we obtain that 
$$\begin{pmatrix}t_1(w_j) \\ \vdots \\ t_{\ga}(w_j) \end{pmatrix} = \begin{pmatrix}a_{1j}\\ \vdots \\ a_{\ga j} \end{pmatrix} \quad \text{ for all } 1\leq j\leq k.$$ 
By assumption, these column vectors are pairwise different, hence $t_1,\ldots,t_{\ga}$ are separating. Minimality follows from part 1.
\end{proof}

\section{Minimal separating sets in special cases}\label{section_res2}

Given $f\in \F[V]$ and $1\leq i\leq n$, let $\deg_{x_i}(f)$ denote the degree of $f$ considered as a polynomial in one variable $x_i$. For $d\in \N$ let $\F[V](d)$ denote the subspace of $\F[V]$ that consists of all $f\in \F[V]$ with $\deg_{x_i}(f)\leq d$ for all $1\leq i\leq n$.  Note that $f_j$ as defined in (\ref{eq_fj}) lies in $\F[V](q-1)$.

Let $\mathcal{O}(V)$ denote the algebra of (polynomial) functions on the vector space $V$, i.e., $\mathcal{O}(V)\simeq \F[V] / \Ker(\Psi)$ for the obvious homomorphism
$\Psi: \F[V]\to \mathcal{O}(V)$%
. It is well-known that $\Ker(\Psi)$ is generated by $x_i^q - x_i$ with $1 \leq i \leq n$. 

\begin{lem}\label{lemma1}
Assume that $g\cdot f_j$ belongs to $\F[V](q-1)$ for all $g\in G$ and all $1\leq j \leq k$. Then $f_1 \upto f_k$ are invariants. Hence they form a separating set for $\F[V]^G$.
\end{lem}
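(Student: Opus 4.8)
The plan is to prove that under the hypothesis, each $f_j$ equals the invariant $n_j$ as a function on $V$, which will immediately give that $f_j$ is invariant and hence that $\{f_1\upto f_k\}$ is separating. The central observation is that an element of $\F[V](q-1)$ is completely determined by the function it induces on $V=\F_q^n$: if two polynomials in $\F[V](q-1)$ agree as functions, their difference lies in $\Ker(\Psi)$ and also in $\F[V](q-1)$, but the generators $x_i^q-x_i$ of $\Ker(\Psi)$ raise the $x_i$-degree to $q$, so the only element of $\Ker(\Psi)\cap\F[V](q-1)$ is zero. Thus the map $\Psi$ is injective on the subspace $\F[V](q-1)$.

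First I would record this injectivity statement carefully, since it is the engine of the argument: $\Psi$ restricted to $\F[V](q-1)$ is injective. Next I would check that under the hypothesis the invariant $n_j$ actually lies in $\F[V](q-1)$. By definition $n_j=\prod_{g\in G}(g\cdot f_j)$, which a priori has large $x_i$-degree; but as a \emph{function} on $V$, formula~(\ref{eq_n}) shows $n_j$ takes only the values $0$ and $1$, and in fact it is the indicator function of the orbit $W_j$. The same formula~(\ref{eq_f}) shows $f_j$ is the indicator function of $W_j$ as well, so $\Psi(f_j)=\Psi(n_j)$. Since $f_j\in\F[V](q-1)$ by the remark preceding the lemma, I need to exhibit a polynomial representative of $n_j$ lying in $\F[V](q-1)$ in order to apply injectivity to conclude $f_j=n_j$ in $\F[V]$.

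The key step where the hypothesis enters is precisely here. The assumption that every $g\cdot f_j$ lies in $\F[V](q-1)$ does not by itself bound the degree of the product $\prod_{g}(g\cdot f_j)$. Instead I would argue as follows: each factor $g\cdot f_j\in\F[V](q-1)$ is, as a function, the indicator of the translated orbit $g\cdot W_j=W_j$ (orbits are $G$-stable), so each factor induces the same function as $f_j$, and by injectivity of $\Psi$ on $\F[V](q-1)$ we get $g\cdot f_j=f_j$ as polynomials, for every $g$. This directly shows $f_j$ is $G$-invariant, bypassing the product $n_j$ entirely. Consequently $f_1\upto f_k\in\F[V]^G$, and since they already separate all $v\in V$ by (\ref{eq_f}), they form a separating set for $\F[V]^G$.

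The main obstacle is making the injectivity of $\Psi|_{\F[V](q-1)}$ rigorous and using it in the right place. One must be careful that $\Psi(g\cdot f_j)=\Psi(f_j)$ genuinely holds as functions, which follows because $g$ permutes $V$ preserving the orbit decomposition and the induced function of $f_j$ is constant on orbits by (\ref{eq_f}); then the hypothesis guarantees both $g\cdot f_j$ and $f_j$ live in the subspace on which $\Psi$ is injective, forcing equality of the polynomials themselves. Once this is set up, the invariance and the separating property are immediate, the latter being a direct restatement of (\ref{eq_f}) together with the fact that each $f_j$ is now a genuine invariant.
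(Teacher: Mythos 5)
Your proof is correct and is essentially the paper's own argument: you show that $g\cdot f_j$ and $f_j$ induce the same function on $V$ (since $f_j$ is the indicator of the $G$-stable orbit $W_j$) and both lie in $\F[V](q-1)$, then conclude $g\cdot f_j = f_j$ because $\Psi$ is injective on $\F[V](q-1)$ — which is exactly the paper's step that $g\cdot f_j - f_j \in \Ker(\Psi)\cap\F[V](q-1) = \{0\}$, justified there by the Gr\"obner basis $x_1^q-x_1\upto x_n^q-x_n$ of $\Ker(\Psi)$. The initial detour through $n_j$ is discarded in your final argument, so the two proofs coincide.
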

\begin{proof} Let $g \in G$ and $1 \leq j \leq k$. We want to show that $g \cdot f_j = f_j$. Since $f_j$ and by assumption also $g \cdot f_j$ lie in $F[V](q-1)$, we know that 
$g\cdot f_j - f_j \in  \F[V](q-1)$. 

Formula (\ref{eq_f}) implies that \[(g\cdot f_j - f_j)(v)=f_j(g^{-1}\cdot v) - f_j(v) = 0.\] Thus $g\cdot f_j - f_j$ belongs to $\Ker(\Psi)$. Using the fact that $x_1^q - x_1 \upto x_n^q - x_n$ is a Gr\"obner basis of the ideal $\Ker(\Psi)$ and $\deg_{x_i}(g \cdot f_j - f_j) \leq q-1$ for all $i$, we can see that $g \cdot f_j - f_j = 0$. Hence the polynomials $f_j$ are invariants and by formula~(\ref{eq_f}) they separate $G$-orbits on $V$.
\end{proof}

Let $G_n<\GL_n(\F)$ denote the group of {\it monomial matrices}, i.e., the set of all $n\times n$ matrices over $\F$ with exactly one nonzero entry in each row and exactly one nonzero entry in each column. This group is isomorphic to a semidirect product $G_n \simeq \Sym_n \ltimes (\F^{\ast})^n$, where $\Sym_n$ is the symmetric group on $n$ letters and $\F^{\ast}$ is the multiplicative group of the field $\F$.  Hence we may write an element $g\in G_n$ uniquely as $g=(\si; \al_1,\ldots,\al_n)$ with a permutation $\si \in \Sym_n$ and elements $\al_i \in \F^\ast$ such that we have 
\begin{equation}\label{eq_Gn_action}
g \cdot x_i= \al_i x_{\si(i)} \quad \text{(for all }1\leq i\leq n). 
\end{equation}

\begin{lem}\label{lemma_Gn} The group $G <\GL_n(\F)$ is a subgroup of the group $G_n$ of monomial matrices if and only if 
$g\cdot f_j$ belongs to $\F[V](q-1)$ for all $g\in G$ and all $1\leq j \leq k$.
\end{lem}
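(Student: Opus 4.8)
The plan is to prove the two implications separately, since one direction is a short substitution and the other requires exhibiting a single orbit on which the degree bound fails. For the forward direction I would assume $G\le G_n$ and take $g=(\si;\al_1\upto\al_n)\in G$, so that $g\cdot x_i=\al_i x_{\si(i)}$ by \eqref{eq_Gn_action}. Substituting into the definition of $f_w$, each factor $\prod_{a\in\F\setminus\{w_i\}}(g\cdot x_i-a)=\prod_{a\in\F\setminus\{w_i\}}(\al_i x_{\si(i)}-a)$ is a polynomial in the single variable $x_{\si(i)}$ of degree $q-1$. Since $\si$ is a permutation, distinct values of $i$ produce distinct variables $x_{\si(i)}$, so each variable occurs in exactly one factor and to degree $q-1$; hence $g\cdot f_w\in\F[V](q-1)$ for every $w$, and $g\cdot f_j=\sum_{w\in W_j}g\cdot f_w\in\F[V](q-1)$ because $\F[V](q-1)$ is a subspace.

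For the converse I would argue the contrapositive: assuming some $g\in G$ is not monomial, I must produce an index $j$ with $g\cdot f_j\notin\F[V](q-1)$. The key idea is to use the orbit of the origin, $W_{j_0}=\{0\}$, which is fixed by every linear map; then $f_{j_0}=f_0$ is a single summand, so no cancellation coming from the orbit sum can occur. Using $\prod_{a\in\F^\ast}(x_i-a)=x_i^{q-1}-1$ one rewrites $f_0=(-1)^n\prod_i(x_i^{q-1}-1)$, and therefore $g\cdot f_0=(-1)^n\prod_i\bigl((g\cdot x_i)^{q-1}-1\bigr)$.

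The heart of the argument — and the step I expect to be the main obstacle — is to compute $\deg_{x_\ell}(g\cdot f_0)$ exactly. Writing $g\cdot x_i=\sum_\ell b_{i\ell}x_\ell$ and viewing the product as a polynomial in a fixed variable $x_\ell$, a factor $(g\cdot x_i)^{q-1}-1$ has $x_\ell$-degree $q-1$ precisely when $b_{i\ell}\neq0$, with leading coefficient the constant $b_{i\ell}^{q-1}\in\F^\ast$. The delicate point is that these leading coefficients cannot cancel: the leading coefficient in $x_\ell$ of the whole product is $\prod_{i\,:\,b_{i\ell}\neq0}b_{i\ell}^{q-1}$, a product of nonzero field elements and hence nonzero. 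Consequently $\deg_{x_\ell}(g\cdot f_0)=(q-1)c_\ell$, where $c_\ell$ denotes the number of nonzero entries in column $\ell$ of the (invertible) matrix $(b_{i\ell})$. Thus $g\cdot f_0\in\F[V](q-1)$ forces $(q-1)c_\ell\le q-1$, i.e.\ $c_\ell\le1$, for every $\ell$; since $(b_{i\ell})$ is invertible this means exactly one nonzero entry per column and, again by invertibility, per row, so $g$ is monomial. Contrapositively, if $g$ is not monomial some column satisfies $c_\ell\ge2$, whence $\deg_{x_\ell}(g\cdot f_0)\ge2(q-1)>q-1$ and $g\cdot f_{j_0}\notin\F[V](q-1)$, as required.
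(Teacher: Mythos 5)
Your proof is correct and follows essentially the same route as the paper: the forward direction by direct substitution using that $\si$ permutes the variables, and the converse by specializing to the singleton orbit of the zero vector and computing $\deg_{x_\ell}(g\cdot f_0)=(q-1)c_\ell$, which forces at most one nonzero entry per column of the matrix of $g$. Your explicit leading-coefficient check is just a spelled-out version of the additivity of single-variable degrees over an integral domain that the paper uses implicitly.
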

\begin{proof} 
Assume that $G<G_n$ and take $g=(\si; \al_1,\ldots,\al_n)\in G$ and $w\in V$. Formula~(\ref{eq_Gn_action}) implies that 
$$\deg_{x_i}(g\cdot f_w) = \sum\limits_{r=1}^n \sum\limits_{a\in \F\setminus \{w_r\}} 
\deg_{x_i}(\al_r x_{\si(r)} - a) = q-1.$$
Therefore, $g\cdot f_j$ belongs to $\F[V](q-1)$ for all $j$. 

Now assume that $g\cdot f_j$ belongs to $\F[V](q-1)$ for all $g \in G$ and for $1 \leq j \leq k$. To show that $g$ is a monomial matrix, it is enough to consider this assumption for the polynomial $f_j$ which belongs to the orbit of the zero vector of $V$. We can write $f_j = f_0$ and here we have for every $1 \leq i \leq n$:
$$q-1 \geq \deg_{x_i}(g\cdot f_0) = \sum\limits_{r=1}^n \sum\limits_{a\in \F^{\ast}} 
\deg_{x_i}(g\cdot x_r - a) = (q-1) \sum\limits_{r=1}^n  
\deg_{x_i}(g\cdot x_r).$$
Note that
$$\deg_{x_i}(g\cdot x_r)=\left\{
\begin{array}{cl}
1, & (g^{-1})_{r,i}\neq 0  \\
0, & \text{otherwise} \\
\end{array}
\right. .
$$ 
So the number of nonzero elements in any column of $g^{-1}$ is less or equal to $1$. It follows that $g$ is a monomial matrix. 
\end{proof}

Lemmas~\ref{lemma1},~\ref{lemma_Gn} imply the following statement:

\begin{cor}\label{cor_Gn} If $G<G_n$, then $f_1,\ldots,f_k$ form a separating set for $\F[V]^G$.
\end{cor}

\begin{lem}\label{lemma2}
Assume that $|G|$ is invertible in $\F$. Then
$$h_j=\frac{1}{|G|}\sum\limits_{g\in G} g\cdot f_j \qquad (1\leq j\leq k)$$
form a separating set for $\F[V]^G$.
\end{lem}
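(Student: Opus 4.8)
The plan is to follow the template already established for the invariants $n_j$ at the start of Section~\ref{section_res}, simply replacing the product $\prod_{g\in G}$ by the normalised sum $\frac1{|G|}\sum_{g\in G}$; the hypothesis that $|G|$ is invertible in $\F$ is precisely what makes this average legitimate. First I would check that each $h_j$ lies in $\F[V]^G$. This is the standard Reynolds-operator observation: for any $g'\in G$ one has
$$g'\cdot h_j=\frac{1}{|G|}\sum_{g\in G}(g'g)\cdot f_j=\frac{1}{|G|}\sum_{g''\in G}g''\cdot f_j=h_j,$$
since left translation by $g'$ merely permutes the summation index. Hence $h_j\in\F[V]^G$.

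The key step is to evaluate $h_j$ at an arbitrary point and to show that it reproduces the indicator behaviour of formula~(\ref{eq_f}). Using $(g\cdot f_j)(v)=f_j(g^{-1}\cdot v)$ I would write
$$h_j(v)=\frac{1}{|G|}\sum_{g\in G}f_j(g^{-1}\cdot v).$$
The crucial remark is that the orbits $W_1\upto W_k$ are $G$-stable, so if $v$ lies in the orbit $W_i$ then $g^{-1}\cdot v\in W_i$ for \emph{every} $g\in G$. By~(\ref{eq_f}) the summand $f_j(g^{-1}\cdot v)$ is therefore independent of $g$: it equals $1$ for all $g$ when $i=j$ and $0$ for all $g$ otherwise. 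The average thus collapses, giving $h_j(v)=1$ if $v\in W_j$ and $h_j(v)=0$ otherwise, exactly the identity~(\ref{eq_n}) satisfied by the $n_j$.

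It then remains only to conclude exactly as before: if $u,v\in V$ can be separated by $\F[V]^G$, then, $G$ being finite, they lie in distinct orbits $W_i\neq W_j$, and the invariant $h_i$ satisfies $h_i(u)=1\neq 0=h_i(v)$. Hence $h_1\upto h_k$ form a separating set. I do not anticipate a genuine obstacle here; the one point worth isolating is that the average degenerates into an indicator function because $f_j(g^{-1}\cdot v)$ does not actually depend on $g$, which rests solely on the $G$-stability of the orbits. The invertibility of $|G|$ is used only to make the normalisation $\tfrac{1}{|G|}$ meaningful, playing no deeper role than that.
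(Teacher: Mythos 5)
Your proof is correct and follows the same route as the paper, which simply asserts the invariance of the $h_j$ and the indicator identity $h_j(v)=1$ for $v\in W_j$, $0$ otherwise, as obvious; you have merely supplied the two standard verifications (the Reynolds-operator computation and the collapse of the average via $G$-stability of orbits) that the paper leaves implicit. No gap.
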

\begin{proof}
Obviously, $h_1,\ldots,h_k$ belong to $\F[V]^G$ and for each $v\in V$ we have 
\begin{equation}\label{eq_h}
h_j(v)=\left\{
\begin{array}{cl}
1,& v\in W_j \\
0,& \text{otherwise} \\
\end{array}
\right..
\end{equation} 
Thus $h_1,\ldots, h_k$ separate $G$-orbits on $V$.  
\end{proof}

\begin{thm}\label{theo2} 
Let $(a_{ij}) \in \F^{\gamma \times k}$ be a matrix such that all columns are pairwise different. Then in the following two cases we can construct minimal separating sets $t'_1 \upto t'_{\ga}$ and  $t''_1 \upto t''_{\ga}$, respectively, consisting of elements of degree less or equal to $n(q-1)$: 
\begin{enumerate}
\item[(a)] if $G$ is a subgroup of the group $G_n$ of monomial matrices, then define
$$t'_i =  a_{i1} f_1 + \cdots + a_{ik} f_k \quad (1\leq i\leq \ga);$$

\item[(b)] if $|G|$ is invertible in $\F$, then define
$$t''_i =  a_{i1} h_1 + \cdots + a_{ik} h_k \quad (1\leq i\leq \ga),$$
where $h_1,\ldots,h_k$ were defined in Lemma~\ref{lemma2}.
\end{enumerate}

\end{thm}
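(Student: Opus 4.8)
The plan is to mirror the proof of Theorem~\ref{theo1} part~2, replacing the high-degree orbit indicators $n_j$ by the lower-degree polynomials $f_j$ in case (a) and $h_j$ in case (b). The crucial observation is that all three families share the very same orbit-indicator behaviour, recorded in formulas~(\ref{eq_f}) and~(\ref{eq_h}), so the separation argument transfers verbatim; only the degree bound and the verification of invariance need separate attention in each case.

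First I would record the degree bounds. The polynomial $f_w$ is a product of $n$ factors, the $r$-th being $\prod_{a\in\F\setminus\{w_r\}}(x_r-a)$, which has degree $q-1$ in $x_r$. Hence $f_w$ has total degree $n(q-1)$, and so does each $f_j=\sum_{w\in W_j}f_w$. In case (a) the $t'_i$ are $\F$-linear combinations of the $f_j$ and therefore have degree at most $n(q-1)$. In case (b) I note that every $g\in G\leq\GL_n(\F)$ acts on $\F[V]$ by a degree-preserving linear substitution, so $\deg(g\cdot f_j)=\deg(f_j)\leq n(q-1)$; consequently $h_j$ and the linear combinations $t''_i$ also have degree at most $n(q-1)$.

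Next I would confirm that the $t'_i$ and $t''_i$ are genuine invariants. In case (a), Corollary~\ref{cor_Gn} (via Lemma~\ref{lemma1}) guarantees that $f_1\upto f_k$ lie in $\F[V]^G$, so any $\F$-linear combination does too. In case (b), the $h_j$ are invariant by construction (Lemma~\ref{lemma2}), hence so are the $t''_i$. For the separating property I would pass to orbit representatives $w_1\in W_1\upto w_k\in W_k$ and use the indicator formulas: since $f_l(w_j)=\de_{lj}$ by~(\ref{eq_f}) and likewise $h_l(w_j)=\de_{lj}$ by~(\ref{eq_h}), we obtain
$$\begin{pmatrix} t'_1(w_j) \\ \vdots \\ t'_{\ga}(w_j) \end{pmatrix} = \begin{pmatrix} a_{1j} \\ \vdots \\ a_{\ga j} \end{pmatrix} \qquad (1\leq j\leq k),$$
and the identical identity for the $t''_i$.

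Because the columns of $(a_{ij})$ are pairwise different by hypothesis, these column vectors are pairwise different, so each family takes distinct values on distinct $G$-orbits; together with invariance this makes each a separating set. Minimality is then immediate from part~1 of Theorem~\ref{theo1}. I expect no real obstacle: the only points demanding care are that replacing the product $n_j=\prod_{g\in G}(g\cdot f_j)$ by the single sum $f_j$ (resp.\ $h_j$) still yields both the indicator property and membership in $\F[V]^G$ — and exactly these two facts are furnished by Corollary~\ref{cor_Gn} and Lemma~\ref{lemma2}, respectively. The payoff of the substitution is purely in the degree, which drops from $|G|n(q-1)$ to $n(q-1)$.
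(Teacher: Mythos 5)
Your proof is correct and follows essentially the same route as the paper: the paper likewise deduces invariance of the $t'_i$ and $t''_i$ from Corollary~\ref{cor_Gn} and Lemma~\ref{lemma2}, then reruns the evaluation argument of Theorem~\ref{theo1}, part~2, with formulas~(\ref{eq_f}) and~(\ref{eq_h}) in place of~(\ref{eq_n}), and gets minimality from part~1. Your explicit verification of the degree bound $n(q-1)$ is a detail the paper leaves implicit, but it is accurate and changes nothing in substance.
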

\begin{proof} Corollary~\ref{cor_Gn} and Lemma~\ref{lemma2} imply that $t'_i, t''_i\in \F[V]^G$.  Using formulas~(\ref{eq_f}) and~(\ref{eq_h}) instead of formula~(\ref{eq_n}) in the proof of part~2 of Theorem~\ref{theo1} concludes the proof of Theorem~\ref{theo2}.
\end{proof}

\section{Application to simple graphs}\label{section_graphs}

This section deals with an $\Sym_n$-representation over $\F_2$ that is motivated by the ``graph isomorphism problem'' and shows that separating invariants over finite fields are meaningful to study. The symmetric group $\Sym_n$ acts on the set of subsets of size two of $\{ 1 \upto n \}$ by
\[ \sigma \cdot \{i,j \} := \{\sigma(i),\sigma(j) \}.\]
For any field $\F$ this defines a representation $V = \F^m$ of $\Sym_n$ of degree $m := \binom{n}{2}$. The points in $V$ correspond to graphs on $n$ vertices, where the edges are weighted with an element of $\F$. The coordinate ring of $V$ is a polynomial ring in $m$ variables
\[ \F[V] = \F[x_{ij} \mid 1 \leq i < j \leq n].\]
We consider the case $n = 4$. In \cite{aslaksen1996invariants} and \cite{thiery2000algebraic} it was shown that over a field $\F$ of characteristic zero the invariant ring $\F[V]^{\Sym_4}$ is minimally generated by 9 homogeneous invariants. Using the ad-hoc notation $o(f)$ for the sum over the $\Sym_n$-orbit of $f \in \F[V]$ these generators are the following orbit sums of square-free monomials:
\begin{eqnarray*}
f_1 := o(x_{12}), \,\,\, & f_2 := o(x_{12}x_{13}), \,\,\, & f_3 := o(x_{12}x_{34}),\\ 
f_4 := o(x_{12}x_{13}x_{14}), \,\,\, & f_5 := o(x_{12}x_{13}x_{23}), \,\,\, & f_6 := o(x_{12}x_{13}x_{24}),\\ 
f_7 := o(x_{12}x_{13}x_{14}x_{24}), \,\,\, & f_8 := o(x_{12}x_{13}x_{24}x_{34}),  \,\,\, &f_9 := o(x_{12}x_{13}x_{14}x_{23}x_{24}).
\end{eqnarray*}
The case $\F = \F_2$ is also very interesting, as here the points in $V$ correspond to simple graphs without weights and the orbits correspond to isomorphism classes of such graphs. A \textsc{magma} \cite{bosma1997magma} computation reveals that over $\F = \F_2$ the 9 invariants from above together with
\[ f_{10} := o(x_{12}x_{13}x_{14}x_{23}x_{24}x_{34}) = x_{12}x_{13}x_{14}x_{23}x_{24}x_{34}\]
form a minimal generating set of the invariant ring.

Note that there are 11 isomorphism classes of simple graphs on $n = 4$ vertices. So by Theorem~\ref{theo1} the minimal size of a separating set for $\F_2[V]^{\Sym_4}$ is 4. In the following theorem we show that such a set cannot be found directly as a subset of the generating set
\begin{equation}\label{eqGeneratorsGraphs} M := \{ f_1 \upto f_{10} \},\end{equation}
but a separating set of size 4 can be easily obtained from it.

\begin{thm}\label{theoGraphs} 
The subsets $S_1 := \{ f_1,f_2,f_3,f_4,f_8 \}$ and $S_2 := \{ f_1,f_2,f_3,f_5,f_8 \}$ are the only minimal (w.r.t. inclusion) separating subsets of $M$. Moreover, $S_3 := \{ f_1, f_2, f_3, f_4 + f_8 \}$ and $S_4 := \{ f_1, f_2, f_3, f_5 + f_8 \}$ are separating sets for $\F_2[V]^{\Sym_4}$ of the smallest possible size.  
In particular, $\besep(\F_2[V]^{\Sym_4})=4$.
\end{thm}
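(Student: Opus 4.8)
\textbf{Proof strategy for Theorem~\ref{theoGraphs}.}

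The plan is to reduce every assertion to a finite computation over the $11$ isomorphism classes of simple graphs on $4$ vertices, which are precisely the $\Sym_4$-orbits on $V = \F_2^6$. Since each generator $f_i \in M$ is an $\Sym_4$-invariant, it is constant on each orbit, so I can encode the whole situation by an $11 \times 10$ matrix $E = (e_{r,i})$ with entries in $\F_2$, where $e_{r,i} := f_i(w_r)$ for a chosen representative $w_r$ of the $r$-th orbit. Producing this table is the one genuinely computational step (it can be read off the explicit orbit-sum descriptions or confirmed by the cited \textsc{magma} run), and everything else is linear algebra and combinatorics on the rows of~$E$. The key reformulation is that a subset $S \subseteq M$ is separating if and only if the corresponding columns of $E$ give pairwise distinct rows; equivalently, for every pair of distinct orbits $r \neq r'$ there is some $f_i \in S$ with $e_{r,i} \neq e_{r',i}$. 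This follows from the general principle (as in the proof of part~2 of Theorem~\ref{theo1}) that separating amounts to distinguishing orbit representatives.

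First I would establish the two claimed minimal separating subsets $S_1, S_2$ of $M$. For each of them I check directly from $E$ that the associated five columns separate all $\binom{11}{2} = 55$ pairs of orbits, i.e. that the ten rows of the restricted matrix are pairwise distinct; then I verify minimality by confirming that deleting any single generator from $S_1$ (respectively $S_2$) causes two orbits to collide. To prove that $S_1$ and $S_2$ are the \emph{only} minimal separating subsets of $M$, I would argue more structurally: I identify, for each generator $f_i$, the set $P_i$ of unordered orbit-pairs that $f_i$ separates, and then observe that a subset $S$ is separating precisely when $\bigcup_{f_i \in S} P_i$ covers all $55$ pairs. Certain pairs are separated by only one or two of the $f_i$; these force any separating set to contain specific generators, pinning the options down to the listed ones and ruling out any separating subset of size $4$ or smaller inside $M$ (which also proves that no size-$4$ subset of $M$ is separating, the first assertion of the theorem).

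Next I would handle $S_3 = \{f_1, f_2, f_3, f_4 + f_8\}$ and $S_4 = \{f_1, f_2, f_3, f_5 + f_8\}$. Since $\F_2[V]^{\Sym_4}$ is a ring, the sums $f_4 + f_8$ and $f_5 + f_8$ are again invariants, and their values on each orbit are obtained by $\F_2$-adding the corresponding rows of $E$. The point is that the combined column $f_4 + f_8$ separates exactly those pairs on which $f_4$ and $f_8$ disagree, so it inherits the separating power of $\{f_4, f_8\}$ on the pairs where exactly one of them is nonconstant; I verify from the table that together with $f_1, f_2, f_3$ this still covers all $55$ pairs. That these sets have size $4$, which by the first part of Theorem~\ref{theo1} equals $\ga(2, 11) = \lceil \log_2 11 \rceil = 4$, is the minimal possible cardinality, establishing $\besep(\F_2[V]^{\Sym_4}) = 4$: the lower bound is Theorem~\ref{theo1}(1), and the upper bound follows because $S_3$ (or $S_4$) is a separating set whose elements have degree at most $6 \leq m(q-1) = 6$.

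The main obstacle is purely bookkeeping rather than conceptual: correctly computing the $11 \times 10$ evaluation table $E$ and then extracting, for the uniqueness claim, exactly which pairs of the $11$ graph-isomorphism classes are \emph{only} separated by the decisive generators $f_4, f_5, f_8$. The subtlety is that $f_4$ and $f_5$ turn out to play interchangeable roles on the critical pairs (reflecting a symmetry between the two $4$-valent patterns $x_{12}x_{13}x_{14}$ and $x_{12}x_{13}x_{23}$), which is precisely why there are exactly two minimal separating subsets and two corresponding size-$4$ sets $S_3, S_4$ rather than one; making that coincidence explicit is the crux of the uniqueness argument.
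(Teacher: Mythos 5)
Your plan for the main assertions is essentially the paper's own proof. The paper writes down exactly the evaluation matrix you describe (a $10\times 11$ matrix, rows indexed by $f_1,\ldots,f_{10}$, columns by the $11$ orbit representatives), observes that columns $2,3,4,9$ differ from the all-zero column $1$ only in rows $1,2,3,8$ respectively (forcing $f_1,f_2,f_3,f_8$ into every separating subset of $M$), that columns $5$ and $6$ are distinguished only by $f_4$ or $f_5$, then checks that the submatrices for $S_1$ and $S_2$ have pairwise distinct columns, and finally that adding the last two rows of those submatrices (i.e.\ passing to $f_4+f_8$, resp.\ $f_5+f_8$) still leaves all columns distinct. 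Your ``pairs covered by which generators'' formulation of uniqueness and your treatment of $S_3,S_4$ by adding columns of the table are the same argument; the explicit deletion check for minimality is redundant once the forcing argument is in place, but harmless.

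There is, however, a genuine gap in your justification of $\besep(\F_2[V]^{\Sym_4})=4$. Theorem~\ref{theo1}(1) bounds the \emph{cardinality} of any separating set; $\besep$ is a \emph{degree} bound, namely the least $d$ such that the set of \emph{all} homogeneous invariants of degree at most $d$ is separating. For $d=3$ that set contains far more than $4$ elements, so Theorem~\ref{theo1}(1) says nothing about whether it separates, and citing it as ``the lower bound'' does not work; that the two numbers $\ga=\lceil\log_2 11\rceil=4$ and $\besep=4$ agree is a coincidence. The correct lower-bound argument uses the fact that $M$ generates $\F_2[V]^{\Sym_4}$ (the \textsc{magma} computation): every homogeneous invariant of degree at most $3$ is a polynomial in the generators of degree at most $3$, i.e.\ in $f_1,\ldots,f_6$, and rows $1$--$6$ of the evaluation matrix vanish on both $p_1$ and $p_9$; hence no invariant of degree at most $3$ separates these two orbits, so $\besep\geq 4$. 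For the upper bound you should invoke the homogeneous set $S_1$ (degrees $1,2,2,3,4$), not $S_3$: $\besep$ is defined via homogeneous invariants, $f_4+f_8$ is not homogeneous, and in any case your degree count is off (the elements of $S_3$ have degree at most $4$, not $6$; your bound ``$\leq 6$'' would only yield $\besep\leq 6$). With these two corrections the claim $\besep(\F_2[V]^{\Sym_4})=4$ follows from the same table.
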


\begin{proof}
Here $V$ is $6$-dimensional and the coordinates are denoted by $x_{12}$, $x_{13}$, $x_{14}$, $x_{23}$, $x_{24}$, $x_{34}$. We take representatives of the orbits in $V$:
\begin{eqnarray*} p_1 := (0,0,0,0,0,0), \,\,\, & p_2 := (1,0,0,0,0,0), \,\,\, & p_3 := (1,1,0,0,0,0),\\
 p_4 := (1,0,0,0,0,1), \,\,\, & p_5 := (1,1,1,0,0,0), \,\,\, & p_6 := (1,1,0,1,0,0),\\
 p_7 := (1,1,0,0,1,0), \,\,\, & p_8 := (1,1,1,1,0,0), \,\,\, & p_9 := (1,1,0,0,1,1),\\
 p_{10} := (1,1,1,1,1,0), \,\,\, & p_{11} := (1,1,1,1,1,1). \,\,\, &
\end{eqnarray*}
Evaluating the 10 generators of $\F_2[V]^{\Sym_4}$ from \eqref{eqGeneratorsGraphs} on the 11 points gives the $10 \times 11$-matrix:
\[ 
\begin{pmatrix}0&1&0&0&1&1&1&0&0&1&0\\
0& 0& 1& 0& 1& 1& 0& 1& 0& 0& 0\\
0& 0& 0& 1& 0& 0& 1& 1& 0& 0& 1\\
0& 0& 0& 0& 1& 0& 0& 1& 0& 0& 0\\
0& 0& 0& 0& 0& 1& 0& 1& 0& 0& 0\\
0& 0& 0& 0& 0& 0& 1& 0& 0& 0& 0\\
0& 0& 0& 0& 0& 0& 0& 1& 0& 0& 0\\
0& 0& 0& 0& 0& 0& 0& 0& 1& 1& 1\\
0& 0& 0& 0& 0& 0& 0& 0& 0& 1& 0\\
0& 0& 0& 0& 0& 0& 0& 0& 0& 0& 1
\end{pmatrix}.
\]
Comparing column $1$ to columns $2$, $3$, $4$ and $9$, respectively, we see that every separating subset of $M$ must contain $f_1,f_2,f_3$ and $f_8$. From columns $5$, $6$ we see that such a set must also contain $f_4$ or $f_5$. Evaluating $S_1$ and $S_2$ on the 11 points gives the submatrices  
\[ 
\begin{pmatrix}0&1&0&0&1&1&1&0&0&1&0\\
0& 0& 1& 0& 1& 1& 0& 1& 0& 0& 0\\
0& 0& 0& 1& 0& 0& 1& 1& 0& 0& 1\\
0& 0& 0& 0& 1& 0& 0& 1& 0& 0& 0\\
0& 0& 0& 0& 0& 0& 0& 0& 1& 1& 1
\end{pmatrix}
\]
and 
\[ 
\begin{pmatrix}0&1&0&0&1&1&1&0&0&1&0\\
0& 0& 1& 0& 1& 1& 0& 1& 0& 0& 0\\
0& 0& 0& 1& 0& 0& 1& 1& 0& 0& 1\\
0& 0& 0& 0& 0& 1& 0& 1& 0& 0& 0\\
0& 0& 0& 0& 0& 0& 0& 0& 1& 1& 1
\end{pmatrix}.
\]
In both matrices the columns are pairwise distinct, hence $S_1$ and $S_2$ are separating sets. If we add the last two rows together in these matrices, then still all columns are pairwise distinct. This shows that $S_3$ and $S_4$ are separating sets. It follows from Theorem~\ref{theo1} that they are of the smallest possible size.
\end{proof}

The next case is $n = 5$. Over a field $\F$ of characteristic zero a minimal generating set for $\F[V]^{\Sym_5}$ consisting of 57 invariants of degree at most 9 is known (see \cite{pouzet2001invariants} and \cite[Section 5.5]{derksen2002computationalv2}). Again we shift the focus to the finite field $\F = \F_2$ in order to deal with simple graphs and to separating instead of generating invariants. There are 34 isomorphism classes of simple graphs with 5 vertices. So by Theorem~\ref{theo1} the minimal size of a separating set for $\F_2[V]^{\Sym_5}$ is 6. Indeed, as in the case $n = 4$ such a minimal separating set can be obtained from certain orbit sums of square-free monomials. We state this result in the following theorem, but refrain from writing down the computations this time.

\begin{thm}\label{theoGraphs2} 
In the case $n = 5$ the following invariants form a separating set for $\F_2[V]^{\Sym_5}$ of the smallest possible size:
\begin{align*}
&g_1 := o(x_{12}), \\
&g_2 := o(x_{12} x_{13}),\\
&g_3 := o(x_{12} x_{34}) + o(x_{12}x_{13}x_{24}x_{34}),\\
&g_4 := o(x_{12} x_{13} x_{14}) + o(x_{12}x_{13}x_{24}x_{34}),\\
&g_5 := o(x_{12} x_{13} x_{23}) + o(x_{12}x_{13}x_{24}x_{34}),\\
&g_6 := o(x_{12} x_{13} x_{14} x_{15}) + o(x_{14}x_{15}x_{24}x_{25}x_{34}x_{35}).\\
\end{align*}
In particular, $\besep(\F_2[V]^{\Sym_5})\leq 6$.
\end{thm}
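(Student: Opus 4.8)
The goal is to establish that the six explicit invariants $g_1 \upto g_6$ form a separating set for $\F_2[V]^{\Sym_5}$ of minimal size, where $V = \F_2^{10}$ is the space of weighted graphs on $5$ vertices. By Theorem~\ref{theo1}, since there are $k = 34$ isomorphism classes (orbits) of simple graphs on $5$ vertices, the minimal size of a separating set equals $\ga = \lceil \log_2(34)\rceil = 6$. So it suffices to verify that the proposed set of size $6$ is actually separating; minimality is then automatic.

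The plan is to proceed exactly as in the proof of Theorem~\ref{theoGraphs}, but now with $34$ orbits instead of $11$. First I would fix a system of representatives $p_1 \upto p_{34} \in V$, one for each isomorphism class of simple graph on $5$ vertices (conveniently listed by edge count, or by any standard enumeration of the $34$ graphs). Next I would evaluate each of the six invariants $g_1 \upto g_6$ on each representative $p_j$; since we are over $\F_2$ each value lies in $\{0,1\}$, and an orbit sum $o(f)$ evaluated at $p_j$ is just the parity of the number of monomials in the orbit of $f$ that are nonzero at $p_j$. This produces a $6 \times 34$ matrix over $\F_2$ whose columns are the ``signatures'' of the $34$ graph classes. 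Finally I would confirm that all $34$ columns of this matrix are pairwise distinct; by the argument in part~2 of Theorem~\ref{theo1} (the column vector of invariant values distinguishes the orbits precisely when the columns differ), this shows $g_1 \upto g_6$ separate the $\Sym_5$-orbits and hence form a separating set. Combined with the lower bound $\ga = 6$, this gives $\besep(\F_2[V]^{\Sym_5}) \leq 6$ and minimality of the set.

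The main obstacle is not conceptual but computational: the construction of the six invariants is far from canonical, and the interesting content is precisely that these particular orbit sums (several of them \emph{combinations} of two orbit sums, such as $g_3 = o(x_{12}x_{34}) + o(x_{12}x_{13}x_{24}x_{34})$) succeed in distinguishing all $34$ classes, whereas one cannot simply take six of the low-degree generators as in a naive approach. The combinations are presumably chosen to break ties between graph classes that would otherwise share a signature, analogous to how $f_4 + f_8$ was formed in Theorem~\ref{theoGraphs}. Since the paper explicitly says it will ``refrain from writing down the computations,'' I would carry out the $6 \times 34$ evaluation by machine (e.g. a \textsc{magma} computation as used for the $n=4$ case), report that all columns are distinct, and invoke Theorem~\ref{theo1} for the lower bound. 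The only genuinely delicate point to double-check is the parity bookkeeping over $\F_2$ in evaluating the orbit sums, since a single miscounted monomial could spuriously merge or separate two columns.
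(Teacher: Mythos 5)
Your proposal is correct and matches the paper's own (omitted) proof exactly: the paper's \textsc{magma} computation likewise fixes $34$ orbit representatives, evaluates $g_1 \upto g_6$ on them to form a $6\times 34$ matrix over $\F_2$, checks that all columns are pairwise distinct, and invokes Theorem~\ref{theo1} for the lower bound $\ga = \lceil\log_2(34)\rceil = 6$. Nothing is missing beyond carrying out the machine computation, which is precisely what the paper does.
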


\begin{proof}
We do not give the proof, as it consists entirely of computations which are very similar but much longer than those in the proof of Theorem~\ref{theoGraphs}. These computations can be done in \textsc{magma} \cite{bosma1997magma}.
\end{proof}

\section{Separating set for multisymmetric polynomials}\label{section_Sn}

In this section we look at the diagonal action of $G=\Sym_n$ on $V^m = V \oplus \cdots \oplus V$ by permuting elements of a fixed basis of $V$. The induced action on the coordinate ring
$$ \F[V^m] = \F[x(j)_i \,|\, 1\leq i \leq n,\, 1\leq j \leq m]$$
is given by $\si\cdot x(j)_i = x(j)_{\si(i)}$ for $\si\in\Sym_n$. Upper bounds on the degrees of elements of minimal generating sets for the algebra of multisymmetric polynomials $\F[V^m]^{\Sym_n}$ were studied by Fleisch\-mann~\cite{fleischmann}, Vaccarino~\cite{vaccarino2005}, Domokos~\cite{domokos2009vector}, and a minimal generating set was explicitly described by Rydh~\cite{rydh2007}.

Given a vector $\un{\al}=(\al_1,\ldots,\al_r)\in \N^r$ we denote $|\un{\al}|=\al_1+\cdots +\al_r$ and $\#\un{\al}=r$. We write $\gcd(\un{\al})$ for the greatest common divisor of $\al_1,\ldots,\al_r$. For any $\un{\al}=(\al_1,\ldots,\al_m)\in \N^m$ and $1\leq i,t\leq n$ we set $x^{\un{\al}}_i=x(1)_i^{\al_1}\cdots x(m)_i^{\al_m}$ and denote by 
$$\si_t(\un{\al})=\sum\limits_{1\leq i_1<\cdots< i_t\leq n} x^{\un{\al}}_{i_1} \cdots x^{\un{\al}}_{i_t} \;\in\; \F[V^m]^{\Sym_n}$$
the \emph{elementary multisymmetric polynomials}. In case $\un{\al}=(0,\ldots,0)$ we have $x^{\un{\al}}_i=1$. For short, we write $\tr(\un{\al})=\si_1(\un{\al})$ for the power sum
\[\tr(\un{\al}) = \si_1(\un{\al})= \sum\limits_{i=1}^n x^{\un{\al}}_i.\]
Over an arbitrary field $\F$ the algebra $\F[V^m]^{\Sym_n}$ is known to be generated by the $m$ elements $\si_n(0,\ldots,0,1,0,\ldots,0)$, where 1 is at position $j$ for $1 \leq j \leq m$, together with the elements $\si_t(\un{\al})$ with $1\leq t\leq n$, $\gcd(\un{\al})=1$ and $\al_j<n/t$ for all $j$ (see Corollary 5.3 of~\cite{domokos2009vector}). There is a natural $\N^m$-grading on $\F[V^m]$, which is preserved by the $\Sym_n$-action. Under this grading $\si_t(\un{\al})$ is homogeneous of multidegree $t\,\un{\al}$.
 
Since $\Sym_n$ is a monomial matrix group, a minimal separating set $S$ for $\F[V^m]^{\Sym_n}$ with the least possible $|S|=\ga$ is given by part~(a) of Theorem~\ref{theo2}. 

In case $m=1$ we write $x_i=x(1)_i$ for the variables and $s_t=\si_t(1)\in \F[V]^{\Sym_n}$ for the elementary symmetric functions in $x_1,\ldots,x_n$. For $i,r\geq 0$ denote by $\xi_r(i)\in\N$ the $r$-th digit of $i$ written as a binary number, i.e., $\xi_r(i)=i_r$, where for $d=\lfloor\log_2(i)\rfloor$ we have $i=(i_d i_{d-1}\ldots i_0)_2$ with $i_d=1$ and $i_{d+1}=i_{d+2}=\cdots=0$. Let us recall Legendre's formula in the partial case of the prime equal to two:

\begin{remark}\label{remark_F2}
Given $i>0$, denote by $\nu(i)\in\N$ the maximal power of two that divides $i$ and we write $\xi(i)\in\N$ for the sum of all digits of $i$ written as a binary number, i.e., $\xi(i)=\sum_{j\geq 0} \xi_j(i)$.  Then $\nu(i!)=i-\xi(i)$.
\end{remark}

Note that for $\F=\F_2$ representatives of $\Sym_n$-orbits on $V$ are $e_i=(1,\ldots,1,0,\ldots,0)$, where $1$ appears $i$ times and $0\leq i\leq n$. 

\begin{lem}\label{lem_F2_binary} Assume $\F=\F_2$. Then
\begin{enumerate}
\item[(a)] $s_{2^r}(e_i)=\xi_r(i)$ for all $r,i\geq 0$, where we consider $\xi_r(i)$ as the element of $\F_2$.

\item[(b)] for every $0\leq a,b\leq n$ with $\De=a-b>0$ invariants  $$\{s_{2^r}\,|\,0\leq r\leq \lfloor \log_2(\De)\rfloor\}$$ 
separate elements $e_{a}$ and $e_{b}$ of $V$.
\end{enumerate}
\end{lem}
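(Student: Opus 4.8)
The plan is to reduce both parts to a computation of binomial coefficients modulo $2$ evaluated at the orbit representatives $e_i$. For part (a), I first observe that $s_t(e_i)=\binom{i}{t}$ in $\F_2$: since $e_i$ has its ones exactly in the first $i$ coordinates, a squarefree monomial $x_{j_1}\cdots x_{j_t}$ appearing in $s_t$ evaluates to $1$ at $e_i$ precisely when $\{j_1,\ldots,j_t\}\subseteq\{1,\ldots,i\}$, and there are $\binom{i}{t}$ such monomials. It then suffices to prove the congruence $\binom{i}{2^r}\equiv\xi_r(i)\pmod 2$, after which (a) follows on setting $t=2^r$.

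To establish this congruence I would invoke Remark~\ref{remark_F2}. For $i\geq 2^r$ I write $\nu\!\left(\binom{i}{2^r}\right)=\nu(i!)-\nu((2^r)!)-\nu((i-2^r)!)$ and substitute Legendre's formula $\nu(j!)=j-\xi(j)$ (using $\xi(2^r)=1$); the integer parts cancel and leave $\nu\!\left(\binom{i}{2^r}\right)=1-\xi(i)+\xi(i-2^r)$. Hence $\binom{i}{2^r}$ is odd if and only if $\xi(i)-\xi(i-2^r)=1$. If $\xi_r(i)=1$ then subtracting $2^r$ simply clears bit $r$, so $\xi(i-2^r)=\xi(i)-1$ and the binomial is odd; if $\xi_r(i)=0$ a borrow occurs so that $\xi(i-2^r)\geq\xi(i)$ and the binomial is even. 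The case $i<2^r$ is immediate, since then $\binom{i}{2^r}=0$ and $\xi_r(i)=0$. This gives $s_{2^r}(e_i)=\xi_r(i)$. (Alternatively, the congruence is a one-line consequence of Lucas' theorem.)

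For part (b), part (a) converts separation into a statement about binary digits: $s_{2^r}$ separates $e_a$ and $e_b$ exactly when $\xi_r(a)\neq\xi_r(b)$, i.e.\ when $a$ and $b$ differ in their $r$-th binary digit. As $\De=a-b>0$ the integers $a,b$ are distinct, so they differ in some digit; let $r_0$ be the lowest such position. Because $a$ and $b$ agree on bits $0,\ldots,r_0-1$, the difference $a-b$ equals $2^{r_0}$ times an odd integer, so $\nu(\De)=r_0$. From $2^{r_0}\mid\De$ and $\De>0$ I get $2^{r_0}\leq\De$, hence $r_0\leq\lfloor\log_2(\De)\rfloor$. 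Therefore $s_{2^{r_0}}$ belongs to the listed set and separates $e_a$ from $e_b$.

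All steps are elementary; the only genuinely technical point is the binomial congruence in part~(a). I expect the main obstacle to be carrying out the $2$-adic valuation bookkeeping cleanly (or, equivalently, applying Lucas' theorem with care), after which part~(b) is a short digit-counting argument built directly on part~(a).
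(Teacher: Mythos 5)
Your proof is correct and follows essentially the same route as the paper: part (a) reduces to $s_{2^r}(e_i)=\binom{i}{2^r}$ and the parity of this binomial coefficient is determined via Legendre's formula (Remark~\ref{remark_F2}) with the same clear-bit/borrow case analysis, and part (b) is the same binary-digit argument, merely phrased directly (locating the lowest differing bit $r_0=\nu(\De)\leq\lfloor\log_2(\De)\rfloor$) instead of by contradiction as in the paper. The only additions — your justification that $s_t(e_i)=\binom{i}{t}$ and the remark about Lucas' theorem — are harmless refinements of steps the paper asserts without comment.
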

\begin{proof} \noindent{\bf (a)}\; If $i<2^r$, then $s_{2^r}(e_i)=0$ and the claim of the lemma holds. 

Assume that $i\geq 2^r$. Then $s_{2^r}(e_i) = \binom{i}{2^r}$.  By Remark~\ref{remark_F2} we have that $\nu(\binom{i}{2^r})= \xi(i-2^r) + 1 - \xi(i)$. If $i=(i_d\ldots i_{r+1}1\,i_{r-1}\ldots i_0)_2$, then $i-2^r=(i_d\ldots i_{r+1}0\,i_{r-1}\ldots i_0)_2$ and $\nu(\binom{i}{2^r})=0$. On the other hand, if 
$$i=(i_d\ldots 1\underbrace{0\ldots0}_{j}0\,i_{r-1}\ldots i_0)_2$$
for some $j\geq0$, then 
$$i - 2^r=(i_d\ldots 0\underbrace{1\ldots1}_{j}1\,i_{r-1}\ldots i_0)_2$$
and $\nu(\binom{i}{2^r})=j+1>0$. Since the characteristic of $\F$ is two, the proof is completed. 

\medskip
\noindent{\bf (b)}\; Assume that $s_{2^r}(e_{a})=s_{2^r}(e_{b})$ for all $0\leq r\leq \lfloor \log_2(\De)\rfloor$. Since $\xi_d(\De)=0$ for all $d>\lfloor \log_2(\De)\rfloor$, part~(a) implies that $\xi_r(a)=\xi_r(b)$ for all $r$, i.e., $\De=0$; a contradiction.
\end{proof}

The next lemma together with Lemma~\ref{lemma_reduction} (see below) allows us to diminish separating sets in some partial cases.

\begin{lem}\label{lem_F2_m1}
If $\F=\F_2$ and $m=1$, then the following set is a minimal separating set for $\F[V]^{\Sym_n}$ containing the least possible number of elements:
$$S_{n,1} = \{s_{2^r}\,|\,0\leq r\leq \lfloor \log_2(n)\rfloor\}.$$
\end{lem}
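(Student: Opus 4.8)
The goal is to prove that for $\F=\F_2$, $m=1$, the set $S_{n,1}=\{s_{2^r}\mid 0\le r\le \lfloor\log_2(n)\rfloor\}$ is a minimal separating set of the least possible cardinality. There are two things to establish: (i) $S_{n,1}$ is separating, and (ii) its size $\lfloor\log_2(n)\rfloor+1$ equals $\ga(2,k)=\lceil\log_2(k)\rceil$, the known minimal number from Theorem~\ref{theo1}, where $k$ is the number of $\Sym_n$-orbits on $V=\F_2^n$. I would also note that minimality with respect to inclusion follows once these are shown, since removing any element decreases the size below $\ga$.

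First I would prove that $S_{n,1}$ separates. By the remark preceding the lemma, the orbit representatives are exactly $e_0,e_1,\dots,e_n$, so there are $k=n+1$ orbits. To separate we must distinguish any two distinct representatives $e_a,e_b$ with, say, $a>b$. Setting $\De=a-b>0$, Lemma~\ref{lem_F2_binary}(b) tells us that the invariants $\{s_{2^r}\mid 0\le r\le \lfloor\log_2(\De)\rfloor\}$ already separate $e_a$ and $e_b$. Since $\De=a-b\le n$, we have $\lfloor\log_2(\De)\rfloor\le\lfloor\log_2(n)\rfloor$, so all these separating invariants lie in $S_{n,1}$. Hence $S_{n,1}$ separates every pair of orbits and is a separating set.

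Next I would verify the cardinality matches the lower bound $\ga$. Since $k=n+1$, the minimal number is $\ga=\ga(2,n+1)=\lceil\log_2(n+1)\rceil$, while $|S_{n,1}|=\lfloor\log_2(n)\rfloor+1$. The key elementary identity to check is that $\lceil\log_2(n+1)\rceil=\lfloor\log_2(n)\rfloor+1$ for all $n\ge 1$. This holds because writing $2^{d}\le n<2^{d+1}$ with $d=\lfloor\log_2(n)\rfloor$ gives $2^{d}<n+1\le 2^{d+1}$, so $\lceil\log_2(n+1)\rceil=d+1$. Thus $|S_{n,1}|=\ga$, and combined with part~1 of Theorem~\ref{theo1}, no separating set can have fewer elements, so $S_{n,1}$ has the least possible number of elements and is in particular minimal with respect to inclusion.

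**The main obstacle.**
The genuine content is entirely packaged into Lemma~\ref{lem_F2_binary}, which is assumed, so the remaining work is light. I expect the only delicate point to be the bookkeeping in the cardinality count, namely confirming $\lceil\log_2(n+1)\rceil=\lfloor\log_2(n)\rfloor+1$ and handling the value of $k$ correctly (it is $n+1$, not $n$, because the orbits are indexed by $i\in\{0,1,\dots,n\}$). As long as one is careful that the range of $r$ in $S_{n,1}$ runs up to $\lfloor\log_2(n)\rfloor$ and that $\De$ can be as large as $n$, the separation argument goes through without friction; the floor/ceiling identity is the one place where an off-by-one error could creep in.
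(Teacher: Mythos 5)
Your proof is correct and follows essentially the same route as the paper: both rest on Lemma~\ref{lem_F2_binary} for the separation property (the paper invokes part~(a) to recover the binary digits of $i$, while you invoke part~(b) with $\De=a-b\le n$, a trivial repackaging since (b) is derived from (a)), and both establish $|S_{n,1}|=\ga$ by counting the $k=n+1$ orbits and checking the identity $\lceil\log_2(n+1)\rceil=\lfloor\log_2(n)\rfloor+1$, then cite part~1 of Theorem~\ref{theo1}. No gaps; your explicit handling of the floor/ceiling identity is exactly the inequality manipulation the paper performs.
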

\begin{proof} By part~(a) of Lemma~\ref{lem_F2_binary}, considering $s_{2^r}(e_i)$ for all $0\leq r\leq \lfloor \log_2(n)\rfloor$ we can recover all digits of $i$ written as a binary number. Thus $S_{n,1}$ is separating.  Since the number of $\Sym_n$-orbits on $V$ is $n+1$, we obtain $2^{\ga-1}<n+1\leq 2^{\ga}$. Then the inequalities  $2^{|S_{n,1}|-1}\leq n<2^{|S_{n,1}|}$ imply  
$|S_{n,1}| = \ga$ and we obtain that $S_{n,1}$ contains the least possible number of elements for a separating set by part~1 of Theorem~\ref{theo1}.
\end{proof}

The next remark shows that the natural generalization of Lemma~\ref{lem_F2_m1} to the case of an arbitrary finite field does not hold. 

\begin{remark}\label{remark_F2_ex}%
\noindent
\begin{enumerate}
\item[1.] Assume that $\F=\F_3$, $m=1$ and denote 
$$M_{1,n}=\{s_1, s_2, s_{3r}\,|\, 1\leq r\leq n/3\}.$$ 
Using straightforward calculations, we can see that for $2\leq n\leq 14$ the set $M_{1,n}$ is a minimal separating set for  $\F[V]^{\Sym_n}$. On the other hand, $M_{1,n}$ contains the least possible number of elements for a separating set  (i.e., $|M_{1,n}|=\ga$) if and only if $n\leq 8$.

\item[2.] Assume that $\F=\F_4$, $m=1$, and $n=3$. Then $\ga=3$ and, therefore, $\{s_1,s_2,s_3\}$ is a minimal separating set for $\F[V]^{\Sym_3}$ by part~1 of Theorem~\ref{theo1}. 
\end{enumerate}
\end{remark}

\begin{lem}\label{lemma_reduction} Consider subsets $J\subset\{1,\ldots,n\}$ and   $A\subset \N^m$  such that $S_{J}=\{s_j\,|\,j\in J\}$ is a separating set for $\F[V]^{\Sym_n}$ and $\{\si_t(\un{\al})\,|\,1\leq t\leq n,\; \un{\al}\in A\}$  is a separating set for $\F[V^m]^{\Sym_n}$. Then $\{\si_j(\un{\al})\,|\,j\in J,\; \un{\al}\in A\}$  
is a separating set for $\F[V^m]^{\Sym_n}$.
\end{lem}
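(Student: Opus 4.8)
The plan is to realize every multisymmetric polynomial $\si_t(\un{\al})$ as the ordinary elementary symmetric function $s_t$ composed with a fixed power-product substitution, and then to chase the separation property through this factorization.

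First I would fix $\un{\al}\in A$ and introduce the substitution $x_i\mapsto x^{\un{\al}}_i=x(1)_i^{\al_1}\cdots x(m)_i^{\al_m}$. By the very definition of $\si_t(\un{\al})$, this polynomial is the $t$-th elementary symmetric function in the $n$ quantities $x^{\un{\al}}_1,\ldots,x^{\un{\al}}_n$, so the substitution turns $s_t$ into $\si_t(\un{\al})$ for every $t$. On the level of points this reads as follows: letting $\psi_{\un{\al}}\colon V^m\to V$ denote the polynomial map sending $p$ to the vector $\bigl(x^{\un{\al}}_1(p),\ldots,x^{\un{\al}}_n(p)\bigr)\in V$, one has
$$\si_t(\un{\al})(p)=s_t\bigl(\psi_{\un{\al}}(p)\bigr)\qquad(1\leq t\leq n,\ p\in V^m).$$
This single identity, which I would verify directly from the definitions, is the heart of the argument.

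Next I would take two points $u,v\in V^m$ that can be separated by $\F[V^m]^{\Sym_n}$ and show they are separated by the reduced set. Since $\{\si_t(\un{\al})\mid 1\leq t\leq n,\ \un{\al}\in A\}$ is separating, there exist $\un{\al}\in A$ and $t$ with $\si_t(\un{\al})(u)\neq\si_t(\un{\al})(v)$. By the identity above this gives $s_t(\psi_{\un{\al}}(u))\neq s_t(\psi_{\un{\al}}(v))$, so the image points $\psi_{\un{\al}}(u),\psi_{\un{\al}}(v)\in V$ are separated by $s_t\in\F[V]^{\Sym_n}$, hence in particular by $\F[V]^{\Sym_n}$. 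Because $S_J=\{s_j\mid j\in J\}$ is separating for $\F[V]^{\Sym_n}$, there is a $j\in J$ with $s_j(\psi_{\un{\al}}(u))\neq s_j(\psi_{\un{\al}}(v))$. Applying the identity once more, in the reverse direction and with the \emph{same} $\un{\al}$, yields $\si_j(\un{\al})(u)\neq\si_j(\un{\al})(v)$, so $u$ and $v$ are separated by an element of $\{\si_j(\un{\al})\mid j\in J,\ \un{\al}\in A\}$, as required.

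I do not expect any genuinely hard step here; the proof is essentially bookkeeping. The one point to be careful about is to keep the \emph{same} $\un{\al}$ throughout, so that the invariant produced at the end actually lies in the prescribed set, and to check the defining identity $\si_t(\un{\al})=s_t\circ\psi_{\un{\al}}$ cleanly (in particular that $\psi_{\un{\al}}$ maps into $V$ and that $s_t$ is invariant, so that separation by $s_t$ does imply separation by $\F[V]^{\Sym_n}$). The conceptual content is entirely in recognizing the factorization of $\si_t(\un{\al})$ through the one-variable symmetric case, which reduces the multisymmetric separation problem to the symmetric one that the hypothesis on $S_J$ controls.
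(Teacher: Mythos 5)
Your proof is correct and is essentially the same as the paper's: your map $\psi_{\un{\al}}$ is exactly the paper's construction of the auxiliary points $u',v'\in V$ via $u_i'=u(1)_i^{\al_1}\cdots u(m)_i^{\al_m}$, and your identity $\si_t(\un{\al})=s_t\circ\psi_{\un{\al}}$ is the paper's equation relating $s_l(u')$ to $\si_l(\un{\al})(\un{u})$. Your extra care in noting that separation by $s_t$ implies separation by $\F[V]^{\Sym_n}$ (so the hypothesis on $S_J$ applies) is a point the paper leaves implicit, but the argument is the same.
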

\begin{proof}
Let  $\un{u}=(u(1),\ldots,u(m)),\,\un{v}=(v(1),\ldots,v(m))\in V^m$ be separated by  $\F[V^m]^{\Sym_n}$. Then they are separated by $\si_t(\un{\al})$ for some $1\leq t\leq n$ and $\un{\al}\in A$. The vectors $u',v'\in V$ defined by $u_i'=u(1)_i^{\al_1}\cdots u(m)_i^{\al_m}$ and $v_i'=v(1)_i^{\al_1}\cdots v(m)_i^{\al_m}$ for all $1\leq i\leq n$  satisfy the following equalities: 
\begin{eq}\label{eq_lemma_reduction}
s_l(u')=\si_l(\un{\al})(\un{u})\;\;\text{and}\;\;s_l(v')=\si_l(\un{\al})(\un{v})
\end{eq}%
for all $1\leq l\leq n$. Hence $s_t(u')$ is not equal to  $s_t(v')$. Since $S_{J}$ is a separating set for $\F[V]^{\Sym_n}$, there exists $j\in J$ such that $s_j$ separates $u',v'$. Finally, equalities~\Ref{eq_lemma_reduction} imply that $\si_j(\un{\al})$ separates $\un{u}$ and $\un{v}$.  
\end{proof}

To define some set of the representatives of $\Sym_n$-orbits on $V^m$ in case $\F=\F_2$ we introduce the following notations.  
Given $r,k\geq1$ and $\un{\tau}\in\N^l$ with $l\geq rk$ we define the {\it $r$-th $k$-interval}: 
$$\int{k}{r} =  \{(r-1)k+1, \ldots, rk\},$$
\noindent{}and the {\it $\un{\tau}$-sum ranges over $\int{k}{r}$}:
$$\sumint{\un{\tau}}{k}{r} = \tau_{(r-1)k+1}+ \cdots+\tau_{rk} = 
\sum\limits_{s\in \int{k}{r}} \tau_{s}.$$
Note that for all $1\leq j\leq m$ and $\tau\in\N^{2^m}$ we have
$$\begin{array}{ccccccccc}
\{1,\ldots, 2^m\} & = &\int{2^j}{1} & \sqcup &\int{2^j}{2} & \sqcup & \cdots & \sqcup & \int{2^j}{2^{m-j}},\\
|\un{\tau}|&=&\sumint{\un{\tau}}{2^j}{1}& +& \sumint{\un{\tau}}{2^j}{2}& + &\cdots &+ &\sumint{\un{\tau}}{2^j}{2^{m-j}}.\\
\end{array}
$$
Given $s\geq0$ and $a\in\F$, denote by $a^s$ the vector $(a,\ldots,a)\in \F^s$. Then in case $\F=\F_2$ it is easy to see that as a set of representatives of $\Sym_n$-orbits on $V^m$ we can take the set  
$${\mathcal O}_{n,m}=\{e_{\un{\tau}}=(e_{\un{\tau}}(1),\ldots,e_{\un{\tau}}(m))\;|\;\un{\tau}\in \N^{2^m} \;\text{and}\;  |\un{\tau}|=n\},$$
where 
$$
\begin{array}{ccl}
e_{\un{\tau}}(1)&=&(1^{\sumint{\un{\tau}}{2^{m-1}}{1}},0^{\sumint{\,\un{\tau}}{2^{m-1}}{2}}), \\
e_{\un{\tau}}(2)&=&(1^{\sumint{\un{\tau}}{2^{m-2}}{1}},0^{\,\sumint{\un{\tau}}{2^{m-2}}{2}}, 1^{\sumint{\un{\tau}}{2^{m-2}}{3}},0^{\,\sumint{\un{\tau}}{2^{m-2}}{4}}), \\
&\vdots&\\
e_{\un{\tau}}(j)&=&(1^{\sumint{\un{\tau}}{2^{m-j}}{1}},0^{\,\sumint{\un{\tau}}{2^{m-j}}{2}}, \ldots, 1^{\sumint{\un{\tau}}{2^{m-j}}{2^j-1}},0^{\,\sumint{\un{\tau}}{2^{m-j}}{2^j}}),\\
&\vdots&\\
e_{\un{\tau}}(m)&=&(1^{\sumint{\un{\tau}}{1}{1}},0^{\,\sumint{\un{\tau}}{1}{2}}, \ldots, 1^{\sumint{\un{\tau}}{1}{2^m-1}},0^{\,\sumint{\un{\tau}}{1}{2^m}})\\
\end{array}
$$
for $1\leq j\leq m$.

\begin{example}\label{ex1} Assume $\F=\F_2$. 
\begin{enumerate}
\item[(a)] If $m=2$, then the set of all $e_{\un{\tau}}=(e_{\un{\tau}}(1),e_{\un{\tau}}(2))$ of $V^2$  with $\un{\tau}\in\N^4$ and  $|\un{\tau}|=n$ is a set of representatives of $\Sym_n$-orbits on $V^2$, where
$$
\begin{array}{ccl}
e_{\un{\tau}}(1)&=&(1^{\tau_1+\tau_2}\;\;,0^{\,\tau_3+\tau_4}\;), \\
e_{\un{\tau}}(2)&=&(1^{\tau_1},0^{\,\tau_2}, 1^{\tau_3},0^{\,\tau_4}). \\
\end{array}
$$

\item[(b)] If $m=3$, then the set of all $e_{\un{\tau}}=(e_{\un{\tau}}(1),e_{\un{\tau}}(2),e_{\un{\tau}}(3))$ of $V^3$  with $\un{\tau}\in\N^8$ and  $|\un{\tau}|=n$ is a set of representatives of $\Sym_n$-orbits on $V^3$, where
$$
\begin{array}{ccl}
e_{\un{\tau}}(1)&=&(1^{\tau_1+\tau_2+\tau_3+\tau_4}\;\;\;\;\;,0^{\,\tau_5+\tau_6+\tau_7+\tau_8}\;\;\;\;\,), \\
e_{\un{\tau}}(2)&=&(1^{\tau_1+\tau_2}\;\;,0^{\,\tau_3+\tau_4}\;, 1^{\tau_5+\tau_6}\;\;,0^{\,\tau_7+\tau_8}\;\,), \\
e_{\un{\tau}}(3)&=&(1^{\tau_1},0^{\,\tau_2}, 1^{\tau_3},0^{\,\tau_4}, 1^{\tau_5},0^{\,\tau_6}, 1^{\tau_7},0^{\,\tau_8}).\\
\end{array}
$$
\end{enumerate}
\end{example}
\medskip

Given $\un{w}\in V^m$ and $1\leq j\leq m$, we write ${\rm Del}_{j}(\un{w})$ for the vector of $V^{m-1}$ obtained from $\un{w}$ by deleting the $j$-th vector, i.e., ${\rm Del}_{j}(\un{w}) = (w_1,\ldots,w_{j-1},w_{j+1},\ldots,w_m)$. We use the next lemma to prove Theorem~\ref{theo_F2} (see below).

\begin{lem}\label{lemma_thF2} Let $\F=\F_2$ and $m\geq2$. For  $\un{\tau},\un{\theta}\in\N^{2^m}$ with  $|\un{\tau}|=|\un{\theta}|=n$ denote $\De_i=\tau_i-\theta_i$ for all $1\leq i\leq 2^m$. Then the following three conditions are equivalent:
\begin{enumerate}
\item[(A)]  for every  $1\leq j\leq m$, $0\leq r\leq \lfloor  \log_2(n)\rfloor$, and $\un{\al}\in\N^m$ with $\al_j=0$ we have $\si_{2^r}(\un{\al})(e_{\un{\tau}})=\si_{2^r}(\un{\al})(e_{\un{\theta}})$; 

\item[($A'$)] for every  $1\leq j\leq m$ the vectors ${\rm Del}_{j}(e_{\un{\tau}})$ and ${\rm Del}_{j}(e_{\un{\theta}})$  belong to the same $\Sym_n$-orbit on $V^{m-1}$;  

\item[(B)] for all $1< i\leq 2^m$ we have $\De_i=(-1)^{\xi(i-1)} \De_1$, where $\xi$ was defined in Remark~\ref{remark_F2}.
\end{enumerate}
\end{lem}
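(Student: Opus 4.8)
The plan is to prove the cycle of implications $(A)' \Rightarrow (A) \Rightarrow (B) \Rightarrow (A)'$. The engine for the whole argument is a counting interpretation of the values $\si_{2^r}(\un{\al})(e_{\un{\tau}})$ over $\F_2$. Since each coordinate of $e_{\un{\tau}}$ is $0$ or $1$, the monomial $x^{\un{\al}}$ evaluates at $e_{\un{\tau}}$ to $1$ exactly on those column-indices whose associated pattern carries a $1$ in every component of the support $S=\{j:\al_j>0\}$, and to $0$ elsewhere. Writing $N_{\un{\al}}(\un{\tau})$ for the number of such columns, every nonzero $2^r$-fold product occurring in $\si_{2^r}(\un{\al})$ selects $2^r$ of these columns, so $\si_{2^r}(\un{\al})(e_{\un{\tau}})=\binom{N_{\un{\al}}(\un{\tau})}{2^r}$, which reduces modulo $2$ to the $r$-th binary digit $\xi_r(N_{\un{\al}}(\un{\tau}))$ exactly as in part~(a) of Lemma~\ref{lem_F2_binary}. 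Since $N_{\un{\al}}(\un{\tau})\le n$, letting $r$ run over $0\le r\le\lfloor\log_2(n)\rfloor$ recovers $N_{\un{\al}}(\un{\tau})$ completely. Consequently condition~(A) is equivalent to the purely combinatorial statement that $N_{\un{\al}}(\un{\tau})=N_{\un{\al}}(\un{\theta})$ for every $\un{\al}$ with some $\al_j=0$, that is, for every support $S$ which is a proper subset of $\{1,\ldots,m\}$.

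Next I would read off $N_{\un{\al}}(\un{\tau})$ from the explicit shape of $e_{\un{\tau}}$. Tracking the block structure shows that the column-block indexed by $i$ has a $1$ in component $j$ precisely when the bit in position $m-j$ of the binary expansion of $i-1$ vanishes. Identifying $\{1,\ldots,2^m\}$ with the subsets of $\{1,\ldots,m\}$ via $i\mapsto U(i)$, the support of the binary expansion of $i-1$ (so that $|U(i)|=\xi(i-1)$ and $i=1$ corresponds to $U=\emptyset$), the monomial $x^{\un{\al}}$ is $1$ on block $i$ exactly when $U(i)\cap S=\emptyset$. Hence $N_{\un{\al}}(\un{\tau})=\sum_{U(i)\subseteq S^c}\tau_i$, and the reformulation of~(A) becomes: the zeta transform $R\mapsto\sum_{U(i)\subseteq R}\De_i$ vanishes on every nonempty $R\subseteq\{1,\ldots,m\}$ (taking $R=S^c$). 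Here I expect the indexing and complementation bookkeeping to be the main obstacle; once it is pinned down, everything that follows is formal.

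For $(A)\Rightarrow(B)$ I would simply M\"obius-invert on the Boolean lattice. Viewing $\De$ as a function of subsets through $i\mapsto U(i)$, the vanishing of all zeta-transform values on nonempty subsets forces $\De_i=(-1)^{|U(i)|}\De_1=(-1)^{\xi(i-1)}\De_1$, which is exactly~(B). The implication $(B)\Rightarrow(A)'$ is even more direct: for a fixed $j$ and a pattern $p$ on the remaining $m-1$ components, the number of columns of ${\rm Del}_{j}(e_{\un{\tau}})$ with pattern $p$ equals $\tau_{i_0}+\tau_{i_1}$, where $i_0,i_1$ are the two indices whose full patterns extend $p$ by the two possible bits in component $j$. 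These two indices have $U(i_0)$ and $U(i_1)$ differing by the single element $j$, so $\xi(i_0-1)$ and $\xi(i_1-1)$ have opposite parity; by~(B) the contributions $\De_{i_0}$ and $\De_{i_1}$ are negatives of each other. Thus every pattern multiplicity of ${\rm Del}_{j}(e_{\un{\tau}})$ agrees with that of ${\rm Del}_{j}(e_{\un{\theta}})$, so the two vectors lie in one $\Sym_n$-orbit, giving~$(A)'$.

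Finally $(A)'\Rightarrow(A)$ is immediate: an invariant $\si_{2^r}(\un{\al})$ with $\al_j=0$ does not involve the $j$-th component, so it factors through ${\rm Del}_{j}$ and therefore takes equal values on $e_{\un{\tau}}$ and $e_{\un{\theta}}$ as soon as ${\rm Del}_{j}(e_{\un{\tau}})$ and ${\rm Del}_{j}(e_{\un{\theta}})$ share an orbit. This closes the cycle and shows that the three conditions are equivalent.
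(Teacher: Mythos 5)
Your proof is correct, and it takes a genuinely different route from the paper's. The paper first proves $(A)\Leftrightarrow(A')$ by a separating-set argument: it factors $\si_t(\un{\al})$ through ${\rm Del}_{j}$ and then invokes Lemmas~\ref{lem_F2_m1} and~\ref{lemma_reduction} to conclude that the invariants $\si_{2^r}(\un{\be})$ with $0\leq r\leq\lfloor\log_2(n)\rfloor$ separate $\Sym_n$-orbits on $V^{m-1}$; it then routes both $(A')$ and $(B)$ through an intermediate condition (C) (the linear relations $\tau_i+\tau_{i+2^{m-j}}=\theta_i+\theta_{i+2^{m-j}}$ for $i\in\mathcal{I}_j$), proving $(A')\Leftrightarrow({\rm C})$ via a tableau of binary column patterns and $({\rm B})\Leftrightarrow({\rm C})$ via a parity computation plus an induction on binary scales. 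You instead close a single cycle $(A')\Rightarrow(A)\Rightarrow(B)\Rightarrow(A')$. Your key move is the evaluation formula $\si_{2^r}(\un{\al})(e_{\un{\tau}})=\binom{N_{\un{\al}}(\un{\tau})}{2^r}\equiv\xi_r\bigl(N_{\un{\al}}(\un{\tau})\bigr)$ from Lemma~\ref{lem_F2_binary}(a), which lets you recover the counts $N_{\un{\al}}$ directly from $(A)$, so you never need Lemma~\ref{lemma_reduction} or any separating property on $V^{m-1}$; and after the same block-to-subset dictionary that underlies the paper's tableau $\T$ (block $i$ carries a $1$ in component $j$ iff bit $m-j$ of $i-1$ vanishes --- equivalent to the paper's convention via $\xi_{m-j}(2^m-i)=\xi_{m-j}(i-1)+1$ in $\F_2$), condition $(A)$ becomes the vanishing of the zeta transform of $\De$ on all nonempty subsets of $\{1\upto m\}$, so $(B)$ drops out from a single application of M\"obius inversion on the Boolean lattice, replacing the paper's induction. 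Your $(B)\Rightarrow(A')$ step is the paper's column pairing $l=i+2^{m-j}$ in disguise (the two blocks whose patterns differ exactly in component $j$), combined with the parity flip; note that $(B)$ trivially extends to $i=1$, so the cancellation $\De_{i_0}+\De_{i_1}=0$ holds even when one of the paired indices is $1$. As to what each approach buys: the paper's version reuses machinery (Lemmas~\ref{lem_F2_m1} and~\ref{lemma_reduction}) needed elsewhere and produces the explicit linear system (C) that is exercised again in Examples~\ref{ex_proof_m2} and~\ref{ex_proof_m3}, while yours is more self-contained and conceptually cleaner, exposing $(A)$ as a zeta-transform condition; the only bookkeeping you should make explicit in a final write-up is that $(A)$ quantifies over all $\un{\al}\in\N^m$ with $\al_j=0$, but on $0/1$ vectors only the support of $\un{\al}$ matters, so restricting to indicator vectors of proper subsets is legitimate.
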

\begin{proof}
\noindent{\underline{$({\rm A})\Longleftrightarrow ({\rm A'})$}}. 
Given $\un{\al}\in\N^m$ with $\al_j=0$, denote by $\un{\be}\in\N^{m-1}$ the vector obtained from $\un{\al}$ by deleting the $j$-th coordinate. Hence
\begin{eq}\label{eq1_lemma_thF2}
\si_{t}(\un{\al})(e_{\un{\tau}}) = \si_{t}(\un{\be})({\rm Del}_{j}(e_{\un{\tau}}))
\end{eq}%
for all $1\leq t\leq n$. Therefore, Lemmas~\ref{lem_F2_m1} and~\ref{lemma_reduction} imply that conditions (A) and ($\rm A'$) are equivalent.
\medskip

Consider the next condition:
\smallskip
\begin{enumerate}
\item[(C)] for every $1\leq j\leq m$ we have 
$\tau_i + \tau_{i+2^{m-j}}=\theta_i + \theta_{i+2^{m-j}}$
for all $i\in \mathcal{I}_j$, where $\mathcal{I}_j=\int{2^{m-j}}{1}\,\sqcup\, \int{2^{m-j}}{3}\, \sqcup\, \cdots\,  \sqcup \, \int{2^{m-j}}{2^j -1}$.
\end{enumerate}%
\smallskip

\noindent{}We will show that ($\rm A'$) is equivalent to (C) and (B) is equivalent to (C).

\medskip
\noindent{\underline{$({\rm A'})\Longleftrightarrow ({\rm C})$}}. At first, let us show that in the partial cases of $j=1,2$ it can be easily shown that conditions  ($\rm A'$) and (C) are equivalent. Namely, if $j=1$, then the condition that ${\rm Del}_{1}(e_{\un{\tau}})$ and ${\rm Del}_{1}(e_{\un{\theta}})$  belong to the same $\Sym_n$-orbit on $V^{m-1}$ is equivalent to   
$$\tau_{i}+\tau_{i+2^{m-1}} = \theta_{i}+\theta_{i+2^{m-1}}$$ 
for all $i\in\{1,\ldots,2^{m-1}\}=\int{2^{m-1}}{1}=\mathcal{I}_j$. Similarly, in the case of $j=2$ condition ($\rm A'$) is equivalent to 
$$\tau_{i}+\tau_{i+2^{m-2}} = \theta_{i}+\theta_{i+2^{m-2}}$$ 
for all $i\in \int{2^{m-2}}{1}\sqcup \int{2^{m-2}}{3}=\mathcal{I}_j$.

To consider the general case we define the tableau $\T$ with $m$ rows and $2^m$ columns such that for $1\leq j\leq m$ the $j$-th row is 
$$(1^{2^{m-j}},0^{2^{m-j}}, \ldots, 1^{2^{m-j}},0^{2^{m-j}}).$$%

\noindent{}In other words, 


$$\T=\left(
\begin{array}{c}
\underbrace{1 \;\cdot\cdot\cdot\cdot\cdot\cdot\cdot\cdot\; 1}_{2^{m-1}} \;  \underbrace{0 \;\cdot\cdot\cdot\cdot\cdot\cdot\cdot\cdot\; 0}_{2^{m-1}} \\
\underbrace{1 \cdots 1}_{2^{m-2}} \; \underbrace{0 \cdots 0}_{2^{m-2}} \;  \underbrace{1 \cdots 1}_{2^{m-2}} \; \underbrace{0 \cdots 0}_{2^{m-2}}  \\
\vdots \\
1 \; 0 \cdot\cdot\cdot\cdot\cdot\cdot 1 \; 0 \; 
1 \; 0 \cdot\cdot\cdot\cdot\cdot\cdot 1 \; 0 \\
\end{array}
\right)$$

\noindent{}It is easy to see that for $1\leq i\leq 2^m$ the $i$-th column $\T_i$ of $\T$ is $(2^m-i)$ written as a binary number, i.e., 
\begin{eq}\label{eq_lemma_thF2}
\T_i=
\left(
\begin{array}{c}
\xi_{m-1}(2^m-i)\\
\xi_{m-2}(2^m-i)\\
\vdots\\
\xi_{0}(2^m-i) \\
\end{array}
\right).
\end{eq}

We remove the $j$-th row from $\T$ and denote the resulting tableau by $\T^{(j)}$. We write $\T^{(j)}_1,\ldots,\T^{(j)}_{2^m}$ for the columns of $\T^{(j)}$. Note that ${\rm Del}_{j}(e_{\un{\tau}})$ and ${\rm Del}_{j}(e_{\un{\theta}})$ belong to the same $\Sym_n$-orbit on $V^{m-1}$ if and only if 
\begin{eq}\label{eq0_lemma_thF2}
\begin{array}{c}
\tau_{i_1}+\cdots+\tau_{i_s} = \theta_{i_1}+\cdots+\theta_{i_s} 
\text{ for each sequence } \\
1\leq i_1<\cdots<i_s\leq 2^m 
\text{ with }\T^{(j)}_{i_1}=\cdots=\T^{(j)}_{i_s}. \\
\end{array}%
\end{eq}

Since columns of $\T$ are pairwise different, for every column $\T^{(j)}_i$ there exists a unique column $\T^{(j)}_l$ with $\T^{(j)}_i=\T^{(j)}_l$ and $i\neq l$. We will describe all such pairs $(i,l)$ with $i<l$. By equality~\Ref{eq_lemma_thF2}, we have $\T^{(j)}_i=\T^{(j)}_l$ for some $i<l$ if and only if the following conditions hold:
\begin{enumerate}
\item[(a)] $\xi_{m-k}(2^m-i)=\xi_{m-k}(2^m-l)$ for every $k\in\{1,\ldots,m\}\backslash\{j\}$, 

\item[(b)] $\xi_{m-j}(2^m-i)=1$, 

\item[(c)] $\xi_{m-j}(2^m-l)=0$.
\end{enumerate}
Obviously, $\xi_{m-k}(2^{m-j})=0$ for every $k\in\{1,\ldots,m\}\backslash\{j\}$ and $\xi_{m-j}(2^{m-j})=1$. Hence conditions (a)--(c) imply that $(2^m-i) - 2^{m-j} = 2^m - l$ and 
\begin{equation}\label{eq2_lemma_thF2}
l=i+2^{m-j}.
\end{equation}%
Therefore, condition~\Ref{eq0_lemma_thF2} is equivalent to 
\begin{eq}\label{eq3_lemma_thF2}
\begin{array}{c}
\tau_i +\tau_{i+2^{m-j}} = \theta_i +\theta_{i+2^{m-j}} 
\text{ for each } 1\leq i\leq 2^m \text{ with } i+2^{m-j}\leq 2^m \\
\text{ and satisfying condition (b)}. 
\end{array}
\end{eq}

Considering the $j$-th row of $\T$ we obtain the  next two properties for the set $\mathcal{I}_j$:
\begin{enumerate}
\item[$\bullet$]  For $1\leq i\leq 2^m$  we have $\xi_{m-j}(2^m-i)=1$ if and only if $i\in \mathcal{I}_j$.

\item[$\bullet$] The set $\{1,\ldots,2^m\}$ is the union without intersections of sets $\{i,i+2^{m-j}\}$, where $i$ ranges over $\mathcal{I}_j$. 
\end{enumerate}
Thus, condition~\Ref{eq3_lemma_thF2} is equivalent to
$$
\begin{array}{c}
\tau_i +\tau_{i+2^{m-j}} = \theta_i +\theta_{i+2^{m-j}} 
\text{ for all }i\in \mathcal{I}_j. 
\end{array}
$$%
The required is proven. 

\medskip
\noindent{\underline{$({\rm B})\Longleftrightarrow ({\rm C})$}}. We can rewrite condition (C) as

\smallskip
\begin{enumerate}
\item[(${\rm C'}$)] for all $1\leq j\leq m$ and $i\in \mathcal{I}_j$ we have 
$\De_{i+2^{m-j}}=-\De_i$.
\end{enumerate}
\smallskip

Assume that condition (B) holds. Then for all $i,j$ as in condition (${\rm C'}$) we have $\De_i= (-1)^{\xi(i-1)}\De_1$ and $\De_{i+2^{m-j}}= (-1)^{\xi(i-1+2^{m-j})}\De_1$. Since  $(2^m-i)+(i-1)=2^m-1=(1\ldots1)_2$, then for every $1\leq k\leq m$ we have 
$$
\xi_{m-k}(2^m-i)=\xi_{m-k}(i-1)+1\;\;\text{in}\;\; \F_2.
$$%
\noindent{}This equality together with the first property of the set $\mathcal{I}_j$ implies that 
\begin{eq}
\xi_{m-j}(i-1)=0 \text{ if and only if }
i\in \mathcal{I}_j.
\end{eq}
Then $\xi(i-1+2^{m-j})=\xi(i-1) + 1$ in $\F_2$. Therefore,  condition (${\rm C'}$) is valid. 

Assume that condition (${\rm C'}$) holds. To establish condition (B) we will show by induction on $l$ that   
\begin{eq}\label{eq3}
\De_{2^l+1}=(-1)^{\xi(2^l)} \De_1,\; 
\De_{2^l+2}=(-1)^{\xi(2^l+1)} \De_1,\;\ldots\; ,\De_{2^{l+1}}=(-1)^{\xi(2^{l+1}-1)} \De_1
\end{eq}%
for all $0\leq l<m$. If $l=0$, then~\Ref{eq3} is $\De_2=-\De_1$ and it follows from condition ($\rm C'$) with $j=m$ and $i=1$.

Assume that for some $0<l_0<m$ equalities~\Ref{eq3} are valid for all $0\leq l<l_0$, i.e., condition (B) is valid for all $1<i\leq 2^{l_0}$. For  $j=m-l_0$ and $i\in \int{2^{l_0}}{1}=\{1,\ldots,2^{l_0}\}\subset \mathcal{I}_j$ equality~$\De_{i+2^{m-j}} = -\De_{i}$ holds. Applying  condition (B) to it we obtain that    
$$\De_{i+2^{l_0}}=-(-1)^{\xi(i-1)}\De_1 = (-1)^{\xi(i+2^{l_0}-1)}\De_1.$$
Thus~\Ref{eq3} is valid for $l=l_0$ and condition (B) is proven.

\end{proof}

\begin{thm}\label{theo_F2} 
If $\F=\F_2$, $m\geq1$, and $n\geq 2$, then the following set is a minimal   
separating set for $\F[V^m]^{\Sym_n}$:
$$S_{n,m} = \{\si_{2^r}(\un{\al})\,|\,r\geq0,\; |\un{\al}|\geq 1,\; r+|\un{\al}|-1\leq \lfloor \log_2(n)\rfloor,\; \al_j\in\{0,1\} \text{ for }1\leq j\leq m\}.$$
\end{thm}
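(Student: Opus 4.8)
The plan is to induct on $m$, taking the base case $m=1$ from Lemma~\ref{lem_F2_m1}. The computational backbone is the following observation: for $\un{\al}\in\{0,1\}^m$ and an orbit representative $e_{\un{\tau}}$, the monomial $x_i^{\un{\al}}$ takes the value $1$ on the $i$-th column of $e_{\un{\tau}}$ exactly when that column carries a $1$ in every position $j$ with $\al_j=1$. Writing $N_{\un{\al}}(\un{\tau})$ for the number of such columns, the value $\si_{2^r}(\un{\al})(e_{\un{\tau}})$ is the reduction mod $2$ of $\binom{N_{\un{\al}}(\un{\tau})}{2^r}$, which by Lemma~\ref{lem_F2_binary}(a) equals the $r$-th binary digit $\xi_r(N_{\un{\al}}(\un{\tau}))$. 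Thus each invariant in $S_{n,m}$ merely reads off one binary digit of one count $N_{\un{\al}}$, and separating $e_{\un{\tau}}$ from $e_{\un{\theta}}$ means making some admissible digit of some $N_{\un{\al}}$ differ.

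To prove that $S_{n,m}$ separates, I would fix distinct orbits $e_{\un{\tau}}\neq e_{\un{\theta}}$. If for some copy $j$ the deletions $\mathrm{Del}_j(e_{\un{\tau}})$ and $\mathrm{Del}_j(e_{\un{\theta}})$ lie in different $\Sym_n$-orbits on $V^{m-1}$, I invoke the induction hypothesis: under the identity $\si_t(\un{\al})(e_{\un{\tau}})=\si_t(\un{\be})(\mathrm{Del}_j(e_{\un{\tau}}))$ from the proof of Lemma~\ref{lemma_thF2}, the subset of $S_{n,m}$ with $\al_j=0$ is exactly the image of the separating set $S_{n,m-1}$, so one of its members separates. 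Otherwise all single deletions agree, which is condition $(A')$ of Lemma~\ref{lemma_thF2}; hence condition $(B)$ holds and $\De_i=(-1)^{\xi(i-1)}\De_1$ for all $i$, with $\De_1\neq0$. Now every $N_{\un{\al}}$ with some $\al_j=0$ is determined by $\mathrm{Del}_j(e_{\un{\tau}})$ and is therefore unchanged, so the only remaining candidate is $\un{\al}=(1,\ldots,1)$, for which $N_{\un{\al}}(\un{\tau})=\tau_1$. The key point is that $(B)$ forces, after possibly swapping $\un{\tau}$ and $\un{\theta}$, at least $2^{m-1}$ of the $\tau_i$ to be $\geq|\De_1|$, whence $|\tau_1-\theta_1|=|\De_1|\leq n/2^{m-1}<2^{\lfloor\log_2 n\rfloor-m+2}$; since $\tau_1\neq\theta_1$ they must already differ in some digit $r\leq\lfloor\log_2 n\rfloor-m+1$, and the corresponding $\si_{2^r}(1,\ldots,1)\in S_{n,m}$ separates.

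For minimality with respect to inclusion I would show each $\si_{2^{r_0}}(\un{\al}_0)\in S_{n,m}$ is indispensable by exhibiting two orbits separated by it alone. Viewing $\un{\tau}$ as a multiplicity function $p\mapsto\tau[p]$ on patterns $p\in\{0,1\}^m$, the counts form the zeta transform $N_{\un{\al}}=\sum_{p\geq\un{\al}}\tau[p]$, so Möbius inversion over the Boolean lattice lets me prescribe all differences $N_{\un{\al}}(\un{\tau})-N_{\un{\al}}(\un{\theta})$ freely. I would demand this difference to be $2^{r_0}$ for $\un{\al}=\un{\al}_0$ and $0$ otherwise; inversion then gives $\tau[p]-\theta[p]=(-1)^{|\un{\al}_0|-|p|}2^{r_0}$ for $p\leq\un{\al}_0$ and $0$ elsewhere. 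Setting $\theta[p]=2^{r_0}$ on the $2^{|\un{\al}_0|-1}$ patterns where this difference is negative, $\tau[p]=2^{r_0}$ where it is positive, and absorbing the leftover mass $n-2^{r_0+|\un{\al}_0|-1}\geq0$ (nonnegative precisely by the defining inequality $r_0+|\un{\al}_0|-1\leq\lfloor\log_2 n\rfloor$) equally into both at the zero pattern, produces honest representatives with $N_{\un{\al}_0}(\un{\theta})=0$ and $N_{\un{\al}_0}(\un{\tau})=2^{r_0}$. Then every $\si_{2^r}(\un{\al})$ with $(r,\un{\al})\neq(r_0,\un{\al}_0)$ agrees on the two orbits, while $\si_{2^{r_0}}(\un{\al}_0)$ does not.

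The main obstacle I anticipate is the rigid second case of the separating argument: one must see that the hypotheses of Lemma~\ref{lemma_thF2} collapse the entire discrepancy onto $\tau_1$ versus $\theta_1$, and then reconcile the purely combinatorial bound $|\De_1|\leq n/2^{m-1}$ with the degree window $r+|\un{\al}|-1\leq\lfloor\log_2 n\rfloor$ built into $S_{n,m}$. It is exactly the numerical coincidence $n/2^{m-1}<2^{\lfloor\log_2 n\rfloor-m+2}$ that guarantees a single invariant $\si_{2^r}(1,\ldots,1)$ still suffices, and this is also what keeps the set from shrinking further. The minimality construction, though longer to write out, is essentially bookkeeping once the zeta/Möbius description of the $N_{\un{\al}}$ is in place.
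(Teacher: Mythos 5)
Your proposal is correct. The separating half is essentially the paper's own argument: the same induction on $m$ anchored at Lemma~\ref{lem_F2_m1}, the same passage through deletions to conditions ($\mathrm{A}'$) and (B) of Lemma~\ref{lemma_thF2}, the same estimate $2^{m-1}|\De_1|\leq n$, and the same conclusion via the binary digits of $\tau_1$ and $\theta_1$ (your divisibility remark is just a reproof of Lemma~\ref{lem_F2_binary}(b)). Where you genuinely depart from the paper is minimality. The paper argues by contradiction: it selects a removable $f=\si_{2^r}(\un{\al})$ with $|\un{\al}|$ minimal, reduces ``without loss of generality'' to $m=|\un{\al}|$ and $\un{\al}=(1,\ldots,1)$ --- a reduction resting on an unstated embedding $V^{|\un{\al}|}\hookrightarrow V^m$ together with the indispensability of smaller-support elements --- and only then exhibits an explicit pair $\un{\tau},\un{\theta}$, parking the excess mass $a=n-2^{r+m-1}$ at the index $i=4$. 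You instead prove directly that every $\si_{2^{r_0}}(\un{\al}_0)\in S_{n,m}$ is indispensable, for arbitrary $\un{\al}_0$, by prescribing the differences of all counts $N_{\un{\al}}$ via M\"obius inversion on the Boolean lattice of patterns and placing the leftover mass at the zero pattern, which lies above no $\un{\al}$ with $|\un{\al}|\geq 1$ and therefore perturbs no count. This is uniform (no WLOG step, no minimal-support selection) and visibly contains the paper's construction as the special case $\un{\al}_0=(1,\ldots,1)$: the paper's index $i=4$ corresponds to a pattern not lying above $\un{\al}_0$, which is exactly the property your zero-pattern choice guarantees --- and which is genuinely needed, since leftover mass at a pattern above $\un{\al}_0$ would shift both values of $N_{\un{\al}_0}$ by $a$ and could let a second digit invariant $\si_{2^t}(\un{\al}_0)$ separate the pair, destroying uniqueness. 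When writing this up in full, make explicit the vanishing of the alternating sum over the interval $[\un{\al},\un{\al}_0]$ for $\un{\al}<\un{\al}_0$ (this is what forces all other counts to agree) and record that $N_{\un{\al}_0}(\un{\theta})=0$ and $N_{\un{\al}_0}(\un{\tau})=2^{r_0}$, so that among the invariants $\si_{2^t}(\un{\al}_0)$ only $t=r_0$ distinguishes the two orbits.
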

\begin{proof} {\bf 1.} At first we show that $S_{n,m}$ is a separating set. 

The proof is by induction on $m\geq1$. If $m=1$, then the required follows from Lemma~\ref{lem_F2_m1}. Assume that the statement of the theorem holds for $m-1$.

Assume that $\un{u},\un{v}\in V^m$ are not separated by all invariants from $S_{n,m}$. To complete the proof it is enough to show that $\un{u},\un{v}$ are not separated by $\F[V^m]^{\Sym_n}$. Obviously, without loss of generality we can assume that $\un{u},\un{v}$ are elements of ${\mathcal O}_{n,m}$, i.e., $\un{u}=e_{\un{\tau}}$ and $\un{v}=e_{\un{\theta}}$ for some $\un{\tau},\un{\theta}\in\N^{2^m}$ with  $|\un{\tau}|=|\un{\theta}|=n$. Since $e_{\un{\tau}}$ and $e_{\un{\theta}}$ are not separated if and only if $\un{\tau}=\un{\theta}$, we have to show that $\un{\tau}=\un{\theta}$.

As in Lemma~\ref{lemma_thF2}, denote $\De_i=\tau_i-\theta_i$ for all $1\leq i\leq 2^m$. For every  $1\leq j\leq m$ we apply the induction hypothesis to obtain that condition (A) from Lemma~\ref{lemma_thF2} holds. 
Hence condition (B) from Lemma~\ref{lemma_thF2} is also valid. Condition (B) implies that  
$$2^{m}|\De_1| = \sum_{i=1}^{2^m} |\De_i|\leq \sum_{i=1}^{2^m} (\tau_i + \theta_i )= |\un{\tau}|+|\un{\theta}|=2n.$$
Therefore,
\begin{equation}\label{eq4}
2^{m-1}|\De_1|\leq n.
  \end{equation}

If $\De_1=0$, then condition (B) implies that $\un{\tau}=\un{\theta}$, i.e.,  $e_{\un{\tau}}$ and $e_{\un{\theta}}$ are not separated by $\F[V^m]^{\Sym_n}$.

Assume that $|\De_1|>0$. For every $0\leq r\leq \lfloor \log_2(|\De_1|)\rfloor$   inequality~\Ref{eq4} implies that $r\leq \lfloor \log_2(n)\rfloor  - m + 1$. Therefore, for $\un{\al}=(1,\ldots,1)\in\N^m$ the invariant $\si_{2^r}(\un{\al})$ belongs to $S_{n,m}$. We have that $\si_{2^r}(\un{\al})(e_{\un{\tau}}) = \si_{2^r}(1)(e_{\tau_1})=s_{2^r}(e_{\tau_1})$ and similar equalities hold for $e_{\un{\theta}}$. Since $e_{\un{\tau}}$ and $e_{\un{\theta}}$ are not separated by $\si_{2^r}(\un{\al})$, it follows from part (b) of Lemma~\ref{lem_F2_binary} that $\tau_1=\theta_1$; a contradiction.

Since we obtained that $e_{\un{\tau}}$ and $e_{\un{\theta}}$ are not separated by $\F[V^m]^{\Sym_n}$, we established that $S_{n,m}$ is separating.

\bigskip
\noindent{\bf 2.} Let us show that $S_{n,m}$ is a minimal separating set, where $m\geq2$ by Lemma~\ref{lem_F2_m1}. Assume the contrary. Then  we can consider an invariant $f=\si_{2^r}(\un{\al})\in S_{n,m}$ with minimal $|\un{\al}|$ such that $S_{n,m}\backslash\{f\}$ is a separating set for $\F[V^m]^{\Sym_n}$. Obviously, without loss of generality we can assume that $m=|\un{\al}|$ and $\un{\al}=(1,\ldots,1)\in\N^m$. Hence, $0\leq r\leq \lfloor \log_2(n)\rfloor - m+1$. 

For $a=n-2^{r+m-1}\geq0$ define $\un{\tau},\un{\theta}\in\N^{2^m}$ as follows:
$$
\tau_i=\left\{
\begin{array}{cl}
2^r + a, & \text{ if } i=4 \\
2^r, & \text{ if } \xi(i-1) \text{ is even and } i\neq 4 \\
0, & \text{ if } \xi(i-1) \text{ is odd}\\
\end{array}
\right.,
$$
$$
\theta_i=\left\{
\begin{array}{cl}
a, & \text{ if } i=4 \\
0, & \text{ if } \xi(i-1) \text{ is even and } i\neq 4 \\
2^r, & \text{ if } \xi(i-1) \text{ is odd}\\
\end{array}
\right.
$$
for all $1\leq i\leq 2^m$. Thus for every $1\leq i\leq 2^m$ we have that $\De_i=\tau_i-\theta_i$ satisfies $\De_i=(-1)^{\xi(i-1)}\De_1$ and $\De_1= 2^r$. 

Denote by $A_0(m)$ the number of integers $0\leq i<2^m$, where $\xi(i)$ is even. Similarly, denote by $A_1(m)$ the number of integers $0\leq i<2^m$, where $\xi(i)$ is odd. Since for an even integer $0\leq i<2^m$ parities of $\xi(i)$ and $\xi(i+1)$ are distinct, we obtain that 
$$A_0(m)=A_1(m)=2^{m-1}.$$
Then $|\un{\tau}|= a + 2^r A_0(m) = n$ and $|\un{\theta}|= a + 2^r A_1(m) = n$. Therefore, $\un{\tau},\un{\theta}$ satisfy condition (B) of Lemma~\ref{lemma_thF2} as well as condition (A). Hence, for each $h\in S_{n,m}$ different from $\si_{2^t}(\un{\al})$ for every $0\leq t\leq \lfloor \log_2(n)\rfloor - m+1$ we have $h(e_{\un{\tau}})=h(e_{\un{\theta}})$. On the other hand, for every $0\leq t\leq \lfloor \log_2(n)\rfloor - m+1$ we have
$$\si_{2^t}(\un{\al})(e_{\un{\tau}})= s_{2^t}(e_{\tau_1}) = s_{2^t}(e_{2^r}) = \xi_t(2^r) = 
\left\{
\begin{array}{cl}
1,& \text{ if }t=r \\
0,& \text{ otherwise } \\
\end{array}
\right.,$$
$$\si_{2^t}(\un{\al})(e_{\un{\theta}})= s_{2^t}(e_{\theta_1}) = s_{2^t}(e_0) = 0.$$
In particular, $\si_{2^t}(\un{\al})(e_{\un{\tau}})=\si_{2^t}(\un{\al})(e_{\un{\theta}})$
for all  $0\leq t\leq \lfloor \log_2(n)\rfloor - m+1$ with $t\neq r$, but $\si_{2^r}(\un{\al})(e_{\un{\tau}})\neq \si_{2^r}(\un{\al})(e_{\un{\theta}})$.
Therefore,  $S_{n,m}\backslash\{f\}$ is not a separating set; a contradiction.
\end{proof}

\begin{example}\label{ex2} For $\F=\F_2$ consider the minimal  
separating set $S_{n,m}$ from Theorem~\ref{theo_F2}:
\begin{enumerate}
\item[(a)] if $m=2$, then $S_{n,2}$ is  
$$\si_{2^r}(1,0),\; \si_{2^r}(0,1),\; \text{ where }\; 0\leq r\leq \lfloor \log_2(n)\rfloor;$$
$$\si_{2^r}(1,1),\; \text{ where }\; 0\leq r\leq \lfloor \log_2(n)\rfloor - 1.$$

\item[(b)] if $m=3$, then $S_{n,3}$ is  
$$\si_{2^r}(1,0,0),\; \si_{2^r}(0,1,0),\;\si_{2^r}(0,0,1),\; \text{ where }\; 0\leq r\leq \lfloor \log_2(n)\rfloor;$$
$$\si_{2^r}(1,1,0),\;\si_{2^r}(1,0,1),\;\si_{2^r}(0,1,1),\; \text{ where }\; 0\leq r\leq \lfloor \log_2(n)\rfloor - 1;$$
$$\si_{2^r}(1,1,1), \text{ where }\; 0\leq r\leq \lfloor \log_2(n)\rfloor - 2.$$
\end{enumerate}
\end{example}

To illustrate the proofs of Theorem~\ref{theo_F2} and key Lemma~\ref{lemma_thF2}, we repeat these proofs in the partial case of $m=2$ in Example~\ref{ex_proof_m2} and in the partial case of $m=3$ in Example~\ref{ex_proof_m3}.

\begin{example}\label{ex_proof_m2}
Let $\F=\F_2$ and $m=2$. We will show that $S_{n,2}$ from Theorem~\ref{theo_F2} is a minimal separating set.

\medskip
\noindent{\bf 1.} At first we show that $S_{n,2}$ is a separating set. Assume that $\un{u}=e_{\un{\tau}}$ and $\un{v}=e_{\un{\theta}}$ are not separated by $S_{n,2}$ for some  $\un{\tau},\un{\theta}\in\N^{4}$ with $|\un{\tau}|=|\un{\theta}|=n$. Using Lemma~\ref{lem_F2_m1} we assume that ${\rm Del}_{j}(e_{\un{\tau}})$ and ${\rm Del}_{j}(e_{\un{\theta}})$ belong to the same $\Sym_n$-orbit for  $j=1,2$.
Since ${\rm Del}_{1}(e_{\un{\tau}})=(1^{\tau_1},0^{\,\tau_2}, 1^{\tau_3},0^{\,\tau_4})\in V$ 
and  ${\rm Del}_{1}(e_{\un{\theta}})$ has a similar form, we obtain that  $\tau_{i}+\tau_{i+2} = \theta_{i}+\theta_{i+2}$  for all $i\in\int{2}{1}=\{1,2\}$. Since ${\rm Del}_{2}(e_{\un{\tau}})=(1^{\tau_1+\tau_2},0^{\,\tau_3+\tau_4})\in V$
and  ${\rm Del}_{2}(e_{\un{\theta}})$ has a similar form, we obtain that  $\tau_{i}+\tau_{i+1} = \theta_{i}+\theta_{i+1}$ for all $i\in \int{1}{1}\sqcup \int{1}{3}=\{1,3\}$. Therefore, by straightforward calculations we verified that condition (C) from Lemma~\ref{lemma_thF2} holds for $\un{\tau},\un{\theta}$. Let us remark that the tableau $\T$ from the proof of Lemma~\ref{lemma_thF2} is  
$$\T=\left(
\begin{array}{c}
1 \; 1 \; 0 \; 0 \\
1 \; 0 \; 1 \; 0\\
\end{array}
\right).$$
Obviously the obtained linear equations on entries of $\un{\tau}$ and $\un{\theta}$ imply that
$$\De_1=-\De_2=-\De_3=\De_4,$$
where $\De_i=\tau_i-\theta_i$ for all $1\leq i\leq 4$, i.e.,  condition (B) of Lemma~\ref{lemma_thF2} holds for $m=2$. We complete the proof of the fact that $\un{\tau}=\un{\theta}$ as in the proof of  Theorem~\ref{theo_F2}.

\medskip
\noindent{\bf 2.}  Let us show that $S_{n,2}$ is a minimal separating set. Assume the contrary. Then  we can consider an invariant $f=\si_{2^r}(\un{\al})\in S_{n,2}$ such that $S_{n,2}\backslash\{f\}$ is a separating set for $\F[V^2]^{\Sym_n}$. Since $S_{n,1}$ is a minimal separating set for $\F[V]^{\Sym_n}$ by Lemma~\ref{lem_F2_m1},   we have $\un{\al}=(1,1)$. For $a=n-2^{r+1}\geq0$ define $\un{\tau}=(2^r,0,0,2^r+a)$ and $\un{\theta}=(0,2^r,2^r,a)$. Then $|\un{\tau}|=|\un{\theta}|=n$ and we obtain a contradiction as in the proof of Theorem~\ref{theo_F2}.
\end{example}

\begin{example}\label{ex_proof_m3}
Let $\F=\F_2$ and $m=3$. We will show that $S_{n,3}$ from Theorem~\ref{theo_F2} is a minimal separating set. 

\medskip
\noindent{\bf 1.} Assume that $\un{u}=e_{\un{\tau}}$ and $\un{v}=e_{\un{\theta}}$ are not separated by $S_{n,3}$ for some  $\un{\tau},\un{\theta}\in\N^{8}$ with $|\un{\tau}|=|\un{\theta}|=n$. Using Example~\ref{ex_proof_m2} we assume that ${\rm Del}_{j}(e_{\un{\tau}})$ and ${\rm Del}_{j}(e_{\un{\theta}})$ belong to the same $\Sym_n$-orbit for all $j\in\{1,2,3\}$.
Since ${\rm Del}_{1}(e_{\un{\tau}})$ is 
$$
\left(
\begin{array}{c}
(1^{\tau_1+\tau_2}\;\;,0^{\,\tau_3+\tau_4}\;, 1^{\tau_5+\tau_6}\;\;,0^{\,\tau_7+\tau_8}\;\,) \\
(1^{\tau_1},0^{\,\tau_2}, 1^{\tau_3},0^{\,\tau_4}, 1^{\tau_5},0^{\,\tau_6}, 1^{\tau_7},0^{\,\tau_8})\\
\end{array}
\right)\in V^2,
$$
and  ${\rm Del}_{1}(e_{\un{\theta}})$ has a similar form, we obtain that  $\tau_{i}+\tau_{i+4} = \theta_{i}+\theta_{i+4}$  for all $i\in\int{4}{1}=\{1,\ldots,4\}$. Since ${\rm Del}_{2}(e_{\un{\tau}})$ is 
$$
\left(\begin{array}{c}
(1^{\tau_1+\tau_2+\tau_3+\tau_4}\;\;\;\;\;,0^{\,\tau_5+\tau_6+\tau_7+\tau_8}\;\;\;\;\,)\\
(1^{\tau_1},0^{\,\tau_2}, 1^{\tau_3},0^{\,\tau_4}, 1^{\tau_5},0^{\,\tau_6}, 1^{\tau_7},0^{\,\tau_8})\\
\end{array}
\right)\in V^2
$$
and  ${\rm Del}_{2}(e_{\un{\theta}})$ has a similar form, we obtain that $\tau_1+\tau_3=\theta_1+\theta_3$, $\tau_2+\tau_4=\theta_2+\theta_4$, $\tau_5+\tau_7=\theta_5+\theta_7$, $\tau_6+\tau_8=\theta_6+\theta_8$. In other words, $\tau_{i}+\tau_{i+2} = \theta_{i}+\theta_{i+2}$ for all $i\in \int{2}{1}\sqcup \int{2}{3}=\{1,2,5,6\}$. Since ${\rm Del}_{3}(e_{\un{\tau}})$ is 
$$
\left(\begin{array}{c}
(1^{\tau_1+\tau_2+\tau_3+\tau_4}\;\;\;\;\;,0^{\,\tau_5+\tau_6+\tau_7+\tau_8}\;\;\;\;\,)\\
(1^{\tau_1+\tau_2}\;\;,0^{\,\tau_3+\tau_4}\;, 1^{\tau_5+\tau_6}\;\;,0^{\,\tau_7+\tau_8}\;\,) \\
\end{array}
\right)\in V^2
$$
and  ${\rm Del}_{3}(e_{\un{\theta}})$ has a similar form, we obtain that $\tau_{i}+\tau_{i+1} = \theta_{i}+\theta_{i+1}$
for all $i\in \int{1}{1}\,\sqcup\, \int{1}{3}\,\sqcup \, \int{1}{5} \,\sqcup \, \int{1}{7} = \{1,3,5,7\}$. Therefore, by straightforward calculations we verified that condition (C) from Lemma~\ref{lemma_thF2} holds for $\un{\tau},\un{\theta}$. Let us remark that the tableau $\T$ from the proof of Lemma~\ref{lemma_thF2} is  
$$\T=\left(
\begin{array}{c}
1 \; 1 \; 1 \; 1\; 0 \; 0 \; 0 \; 0\\
1 \; 1 \; 0 \; 0\; 1 \; 1 \; 0 \; 0\\
1 \; 0 \; 1 \; 0\; 1 \; 0 \; 1 \; 0\\
\end{array}
\right).$$
It is not difficult to see that the obtained linear equations on entries of $\un{\tau}$ and $\un{\theta}$ imply that
$$\De_1=-\De_2=-\De_3=\De_4=-\De_5=\De_6=\De_7=-\De_8,$$
i.e., condition (B) of Lemma~\ref{lemma_thF2} holds for $m=3$. We complete the proof of the fact that $\un{\tau}=\un{\theta}$ as in the proof of  Theorem~\ref{theo_F2}.

\medskip
\noindent{\bf 2.}  Let us show that $S_{n,3}$ is a minimal separating set. Assume the contrary. Then  we can consider an invariant $f=\si_{2^r}(\un{\al})\in S_{n,3}$ such that $S_{n,3}\backslash\{f\}$ is a separating set for $\F[V^2]^{\Sym_n}$. Since $S_{n,1}$ ($S_{n,2}$, respectively) is a minimal separating sets in case $m=1$ ($m=2$, respectively),   we have $\un{\al}=(1,1,1)$. For $a=n-2^{r+2}\geq0$ define $\un{\tau}=(2^r,0,0,2^r+a,0,2^r,2^r,0)$ and $\un{\theta}=(0,2^r,2^r,a,2^r,0,0,2^r)$. Then $|\un{\tau}|=|\un{\theta}|=n$ and we obtain a contradiction as in the proof of Theorem~\ref{theo_F2}.
\end{example}

Given $m_0<m$, the notion of \emph{expansion} of a set $S\subset\F[V^{m_0}]^{\Sym_n}$  to a set  $S^{[m]}$ of $\F[V^m]^{\Sym_n}$ can be found for example in~\cite{Lopatin_Reimers_1}. Denote by $\sisep(n)$ the minimal number $m_0$ such that the expansion of some separating set $S$ for $\F[V^{m_0}]^{\Sym_n}$ produces a separating set for $\F[V^m]^{\Sym_n}$ for all $m \geq m_0$. In~\cite{Lopatin_Reimers_1} it was proven that $\sigma(n) \leq \lfloor \frac{n}{2} \rfloor + 1$ over an arbitrary field $\F$.

\begin{cor}\label{cor} 
For $\F=\F_2$ we have that
\begin{enumerate}
\item[(a)] $\besep(\F[V^m]^{\Sym_n})=2^{\lfloor  \log_2(n)\rfloor}$; 

\item[(b)] $\sisep(n)=\lfloor  \log_2(n)\rfloor + 1$.  
\end{enumerate}
\end{cor}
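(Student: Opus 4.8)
The goal is to prove Corollary~\ref{cor}, which asserts two facts for $\F=\F_2$: part (a) computes $\besep(\F[V^m]^{\Sym_n})=2^{\lfloor\log_2(n)\rfloor}$, and part (b) computes $\sisep(n)=\lfloor\log_2(n)\rfloor+1$. My plan is to read both equalities directly off the explicit minimal separating set $S_{n,m}$ constructed in Theorem~\ref{theo_F2}, combined with lower bound arguments.

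\medskip
\noindent\emph{Part (a).} Recall that $\besep$ is the least $\besep$ so that all homogeneous invariants of degree $\leq\besep$ form a separating set. The first step is to establish the upper bound $\besep\leq 2^{\lfloor\log_2(n)\rfloor}$. For this I would bound the degrees of the elements of the separating set $S_{n,m}$ from Theorem~\ref{theo_F2}. An invariant $\si_{2^r}(\un{\al})\in S_{n,m}$ is homogeneous of multidegree $2^r\un{\al}$, so its total degree is $2^r|\un{\al}|$. Using the defining constraint $r+|\un{\al}|-1\leq\lfloor\log_2(n)\rfloor$ together with $|\un{\al}|\geq 1$, I would maximize $2^r|\un{\al}|$ subject to $2^r\cdot 2^{|\un{\al}|-1}\leq 2^{\lfloor\log_2(n)\rfloor}$ (since $|\un\al|\le 2^{|\un\al|-1}$ when $|\un\al|\ge1$ is not quite tight, so I must instead argue that $2^r|\un{\al}| \le 2^{r+|\un\al|-1}\le 2^{\lfloor\log_2(n)\rfloor}$, using $|\un\al|\le 2^{|\un\al|-1}$). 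This shows every element of $S_{n,m}$ has degree at most $2^{\lfloor\log_2(n)\rfloor}$, and since $S_{n,m}$ is separating, $\besep\leq 2^{\lfloor\log_2(n)\rfloor}$. For the matching lower bound I would exhibit two orbits that are \emph{not} separated by any invariant of degree $<2^{\lfloor\log_2(n)\rfloor}$; the natural candidate is to take $m=1$ (the multidegree structure restricts to ordinary degree there, and Lemma~\ref{lem_F2_m1} tells us $s_{2^{\lfloor\log_2(n)\rfloor}}$ is an indispensable member of the minimal separating set $S_{n,1}$). Concretely I expect the pair $e_a,e_b$ with $a-b=2^{\lfloor\log_2(n)\rfloor}$ (for a suitable choice with $a\le n$, e.g.\ $a=2^{\lfloor\log_2(n)\rfloor}$, $b=0$) to be separated only by $s_{2^{\lfloor\log_2(n)\rfloor}}$ among power sums, via part~(a) of Lemma~\ref{lem_F2_binary} ($s_{2^r}(e_i)=\xi_r(i)$). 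The key point is that these two points differ only in the highest binary digit, so every multisymmetric invariant of degree strictly below $2^{\lfloor\log_2(n)\rfloor}$ takes equal values on them.

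\medskip
\noindent\emph{Part (b).} Here I would analyze when the expansion of a separating set for $\F[V^{m_0}]^{\Sym_n}$ yields a separating set for all larger $m$. The upper bound $\sisep(n)\leq\lfloor\log_2(n)\rfloor+1$ should follow by observing that in the description of $S_{n,m}$ the constraint $r+|\un{\al}|-1\leq\lfloor\log_2(n)\rfloor$ with $r\geq0$ forces $|\un{\al}|\leq\lfloor\log_2(n)\rfloor+1$; thus every invariant in the minimal separating set $S_{n,m}$ involves at most $\lfloor\log_2(n)\rfloor+1$ of the $m$ vector slots (i.e.\ has at most $\lfloor\log_2(n)\rfloor+1$ nonzero $\al_j$'s). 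Consequently $S_{n,m}$ is exactly the expansion of $S_{n,m_0}$ with $m_0=\lfloor\log_2(n)\rfloor+1$, giving $\sisep(n)\leq\lfloor\log_2(n)\rfloor+1$. For the lower bound $\sisep(n)\geq\lfloor\log_2(n)\rfloor+1$, I would use the minimality half of Theorem~\ref{theo_F2}: the set $S_{n,m}$ contains the indispensable invariant $\si_{2^0}(\un{\al})$ with $\un{\al}=(1,\ldots,1)$ of length exactly $\lfloor\log_2(n)\rfloor+1$ (take $r=0$, so the constraint reads $|\un\al|-1\le\lfloor\log_2(n)\rfloor$, attained with equality). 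Since this invariant genuinely depends on $\lfloor\log_2(n)\rfloor+1$ distinct vector coordinates and is indispensable, no expansion from fewer than $\lfloor\log_2(n)\rfloor+1$ slots can produce it, forcing $m_0\geq\lfloor\log_2(n)\rfloor+1$.

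\medskip
\noindent The main obstacle I anticipate is the lower bound in part (a): converting the minimality/indispensability of $s_{2^{\lfloor\log_2(n)\rfloor}}$ in $S_{n,1}$ into the statement that \emph{all} invariants of degree $<2^{\lfloor\log_2(n)\rfloor}$ (not merely those in a chosen generating set) fail to separate the critical pair. The cleanest route is to argue via the ambient graded structure: every invariant of multidegree $\un d$ with total degree $|\un d|<2^{\lfloor\log_2(n)\rfloor}$ lies in the subalgebra generated by the $\si_t(\un\al)$ of total degree $t|\un\al|<2^{\lfloor\log_2(n)\rfloor}$, and one checks that the critical pair $e_a,e_b$ (differing in a single high binary digit) is not separated by any such generator—this reduces to the $m=1$ computation $s_{2^r}(e_i)=\binom{i}{2^r}\bmod 2$ from Lemma~\ref{lem_F2_binary}(a) and the fact that lower powers $s_{2^r}$ with $r<\lfloor\log_2(n)\rfloor$ cannot detect the top digit. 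Assembling this carefully, while keeping track of the distinction between total degree and the multidegree bookkeeping, is where the real work lies.
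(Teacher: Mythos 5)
Your part (a) is correct, and its lower bound takes a somewhat different route from the paper's. You exhibit the explicit pair $e_{2^L},e_0$ (with $L:=\lfloor\log_2(n)\rfloor$, embedded in $V^m$ as $(e_{2^L},0,\ldots,0)$ and $0$), observe that every homogeneous invariant of degree $d<2^L$ is a polynomial in the generators $\si_t(\un{\al})$ of total degree $t|\un{\al}|\le d$, and check that each such generator takes equal values on the pair; the only nontrivial case is $\un{\al}$ supported in the first slot, where this is the Lucas-type computation $\binom{2^L}{t}\equiv 0 \pmod 2$ for $0<t<2^L$. The paper argues differently: it reduces to $m=1$ by restriction, proves a claim that whenever some $s_t$ separates $e_a$ from $e_b$ then some $s_{2^r}$ with $2^r\le t$ already does, and concludes that a separating degree bound below $2^L$ would make the proper subset $S_{n,1}\setminus\{s_{2^L}\}$ of $S_{n,1}$ separating, contradicting the minimality of $S_{n,1}$ from Lemma~\ref{lem_F2_m1}. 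Both arguments rest on the same ingredients (generation by the $\si_t(\un{\al})$, homogeneity, and Lemma~\ref{lem_F2_binary}(a)); yours is more direct and self-contained, the paper's recycles the counting lower bound of Theorem~\ref{theo1}.

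Part (b) is where there is a genuine gap. Your upper bound is fine: every element of $S_{n,m}$ involves at most $L+1$ slots, so $S_{n,m}$ is the expansion of $S_{n,L+1}$ and $\sisep(n)\le L+1$. But your lower bound rests on the inference that since the indispensable invariant $\si_1(1,\ldots,1)$ involves exactly $L+1$ slots, ``no expansion from fewer slots can produce it, forcing $m_0\ge L+1$.'' This is a non sequitur: a separating set is not required to contain any prescribed invariant, so the fact that an expansion from $m_0\le L$ slots never contains $\si_1(1,\ldots,1)$ says nothing about whether such an expansion can separate. Indispensability of $f$ only says that $S_{n,m}\setminus\{f\}$ is not separating, and an expansion of an arbitrary separating set for $\F[V^{m_0}]^{\Sym_n}$ is not a subset of $S_{n,m}$ — note that in part (a) you explicitly identified and circumvented exactly this pitfall, but here you fall into it. What is needed instead is a pair of points of $V^{L+1}$ in distinct $\Sym_n$-orbits all of whose projections onto at most $L$ of the slots lie in the same orbit: then every invariant depending on at most $L$ slots, in particular every element of an expansion from $m_0\le L$ slots, takes equal values on the pair, so no such expansion is separating. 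Such a pair is exactly what the minimality part of the proof of Theorem~\ref{theo_F2} constructs: taking $r=0$ and $m=L+1$ there, the points $e_{\un{\tau}},e_{\un{\theta}}$ satisfy condition ($\mathrm{A}'$) of Lemma~\ref{lemma_thF2} (every one-slot deletion lands in the same orbit, hence so does every projection onto fewer slots), while $\si_1(1,\ldots,1)$ separates them. The paper itself bypasses all of this by citing Remark~1.1 of~\cite{Lopatin_Reimers_1}, which characterizes $\sisep(n)$ directly in terms of the structure of a minimal separating set; a self-contained proof must supply the argument just sketched, not the indispensability one.
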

\begin{proof}

\noindent{\bf (a)} We start with the following claim:
\begin{eq}\label{claim1}
\begin{array}{c}
\text{If }s_t \text{ separates } e_a \text{ and } e_b \text{ for some } 1\leq t\leq n \text{ and }0\leq a,b\leq n, \\
\text{ then there exists } r\geq0 \text{  with } 2^r\leq t \text{ such that } s_{2^r} \text{ separates } e_a \text{ and }e_b.
\end{array}
\end{eq}

To prove this claim we use notation $e_a^{(n)}$ for $e_a$ and $s_t^{(n)}$ for $s_t$ to highlight $n$. Note that for all $1\leq l\leq n_0\leq n$ and $0\leq i\leq n$ we have $s_l^{(n)}(e_i^{(n)})=s_l^{(n_0)}(e_i^{(n_0)})=\binom{i}{l}$. Therefore, $s_t^{(t)}(e_a^{(t)})\neq s_t^{(t)}(e_b^{(t)})$. Taking $t$ as $n$ in Lemma~\ref{lem_F2_m1} we obtain that there exists $r\geq0$ satisfying $2^r\leq t$ and 
$s_{2^r}^{(t)}(e_a^{(t)})\neq s_{2^r}^{(t)}(e_b^{(t)})$. Thus $s_{2^r}^{(n)}$ separates $e_a^{(n)}$ and $e_b^{(n)}$. The claim is proven.

Note that the maximal degree of elements of the separating set $S_{n,m}$ is equal to $2^{\lfloor  \log_2(n)\rfloor}$. Therefore  $\besep(\F[V^m]^{\Sym_n})\leq 2^{\lfloor  \log_2(n)\rfloor}$. Assume that $\besep(\F[V^m]^{\Sym_n})<2^{\lfloor  \log_2(n)\rfloor}$. Then $\besep(\F[V]^{\Sym_n})<2^{\lfloor  \log_2(n)\rfloor}$ and Claim~\Ref{claim1} implied that there exists a proper separating subset of $S_{n,1}$; a contradiction to Lemma~\ref{lem_F2_m1}.

\medskip
\noindent{\bf (b)} 
By Theorem~\ref{theo_F2} and~\cite[Remark~1.1]{Lopatin_Reimers_1} (cf.~\cite[Remark~1.3]{domokos2007}) we have that $\sisep(n)$ is equal to the maximal $k$ such that for $\un{\al}=(1,\ldots,1)$ ($k$ times) we have $\si_{2^r}(\un{\al})\in S_{n,m}$ for some $r,m$. Part (b) is proven.
\end{proof}

\bigskip\bigskip\bigskip


\bibliographystyle{siam}
\bibliography{main}

\end{document}